\newtheorem{theorem}{Theorem}[section]
\theoremstyle{definition}
\newtheorem{definition}[theorem]{Definition}
\newtheorem{example}[theorem]{Example}
\newtheorem{corollary}[theorem]{Corollary}
\newtheorem{lem}[theorem]{Lemma}
\theoremstyle{remark}
\numberwithin{equation}{section}
\begin{document}
\title{2-prime hyperideals }

\author{Mahdi Anbarloei}
\address{Department of Mathematics, Faculty of Sciences,
Imam Khomeini International University, Qazvin, Iran.
}

\email{m.anbarloei@sci.ikiu.ac.ir}


\subjclass[2010]{ 20N20}


\keywords{ Prime hyperideal, $\delta$-primary hyperideal, 2-prime hyperideal, $\delta$-2-primary hyperideal.}

\begin{abstract}
Multiplicative hyperrings are an important class of algebraic hyperstructures which generalize rings further to allow multiple output values for the multiplication  operation. Let $R$ be a commutative  multiplicative hyperring. A proper hyperideal $I$ of $R$ is called 2-prime if $x \circ y \subseteq I$ for some $x, y \in R$, then $x^2 \subseteq I$ or $y^2 \subseteq I$.   The 2-prime hyperideals  are  a generalization of prime hyperideals. In this paper, we aim to  study 2-prime hyperideals and give some results. Moreover, we investigate $\delta$-2-primary hyperideals which are an expansion of   2-prime hyperideals. 
\end{abstract}
\maketitle
\section{Introduction}
The  theory of algebraic hyperstructures was first introduced  by Marty \cite{marty}. He defined the hypergroups as a generalization of groups  in 1934. Since then, algebraic hyperstructures have been investigated
by many researchers with numerous applications in both pure and applied sciences \cite{f1,f2,f3,f4,f5,f6,f7,f8,f9}. In a classical algebraic structure, the composition of two elements is an element, while in an algebraic hyperstructure, the composition of two elements is a set. Similar to hypergroups, hyperrings are algebraic structures more general than rings, subsitutiting both or only one of the binary operations of addition and multiplication by hyperoperations. The hyperrings
were introduced and studied by many authors  \cite{ameri3, ameri4, f12, f13}. Krasner introduced a type of the hyperring  where
addition is a hyperoperation and multiplication is an ordinary binary operation. Such a hyperring is
called a Krasner hyperring \cite{f11}. Mirvakili and Davvaz introduced  $(m,n)$-hyperrings in \cite{f17} and they defined Krasner $(m,n)$-hyperrings as a subclass of $(m,n)$-hyperrings and as a generalization of Krasner hyperrings in \cite{f18}.
A well-known
type of a hyperring, called the multiplicative hyperring. The hyperring was introduced by Rota in 1982 which the multiplication is a hyperoperation,
while the addition is an operation \cite{f14}. There exists a
general type of hyperrings that both the addition and multiplication are hyperoperations \cite{f15}.  Ameri and Kordi have studied Von Neumann regularity in multiplicative hyperrings \cite{ameri5}. Moreover, they introduced the concept of clean multiplicative hyperrings and studied  some topological
concepts to realize clean elements of a multiplicative hyperring by clopen subsets
of its Zariski topology\cite{ameri6}. The
notions such as (weak)zero divisor, (weak)nilpotent and unit in an arbitrary commutative hyperrings were introduced in \cite{ameri2}. Some equivalence relations - called fundamental relations - play important
roles in the the theory of algebraic hyperstructures. The fundamental
relations are one of the most important and interesting concepts in algebraic
hyperstructures that ordinary algebraic structures are derived from algebraic
hyperstructures by them. For more details about hyperrings and fundamental relations on
hyperrings see \cite{x4, f6, x2, x3, x1, marty, x5, f15}.
Prime ideals and primary ideals are two of the most important structures in commutative algebra. The notion of primeness of hyperideal in a multiplicative hyperring was conceptualized by Procesi and rota in \cite{f16}. Dasgupta extended the prime and primary hyperideals in multiplicative hyperrings in \cite{das}. 
Beddani and Messirdi \cite{bed} introduced a generalization of prime ideals called
2-prime ideals and this idea is further generalized by Ulucak and et. al.  \cite{gu}.
In \cite{don}, Dongsheng  defined a new notion which is called $\delta$-primary ideals in commutative rings. In \cite{ul}, Ulucak introduced the concepts $\delta$-primary and 2-absorbing $\delta$-primary hyperideal over multiplicative hyperrings. In \cite{anb}, we investigate $\delta$-primary hyperideals in
a Krasner $(m, n)$-hyperring which unify prime hyperideals and primary hyperideals.

In this paper we consider the class of multiplicative hyperring as a hyperstructure
$(R,+,\circ)$, where $(R,+)$ is an abelian group, $(R,\circ)$ is a semihypergroup
and the hyperoperation "$\circ$" is distributive with respect to the operation
$" + "$. In this paper we introduce and study the notion of 2-prime hyperideals of multiplicative hyperrings  which are  a generalization of prime hyperideals. Several properties of them are provided. Moreover, we investigate $\delta$-2-primary hyperideals which are an expansion of   2-prime hyperideals.

\section{Preliminaries}
In this section we give some defnitions and results  which we
need to develop our paper.

A hyperoperation "$\circ $" on nonempty set $G$ is a mapping of $G \times G$ into the family of all nonempty subsets of $G$. Assume that "$\circ $" is a hyperoperation on $G$. Then $(G,\circ)$ is called hypergroupoid. The hyperoperation on $G$ can be extended to  subsets of $G$ as follows. Let $X,Y$ be subsets of $G$ and $g \in G$, then 
\[X \circ Y =\cup_{x \in X, y \in Y}x \circ y, \ \  X \circ g=X \circ \{g\}. \]
A hypergroupoid $(G, \circ)$ is called  a semihypergroup if for all $x,y,z \in G$, $(x \circ y) \circ z=x \circ (y \circ z)$, which is associative. A semihypergroup is said to be a hypergroup if  $g \circ G=G=G \circ g$ for all  $g \in G$. A nonempty subset $H$ of a semihypergroup $(G,\circ)$ is called a
subhypergroup if for all $x \in H$ we have $x \circ H=H=H \circ x$. A commutative hypergroup $(G,\circ)$ is canonical if
\begin{itemize}
\item[\rm{(i)}]~there exists  $e \in G$ with $e \circ x=\{x\}$, for every $x \in G$.
\item[\rm{(ii)}]~for every $x \in G$ there exists a unique $x^{-1} \in G$ with $e \in x \circ x^{-1}$.
\item[\rm{(iii)}]~$x \in y  \circ z $ implies $y \in x \circ z^{-1}$.
\end{itemize}
A nonempty set $R$ with two hyperoperations $"+"$ and $"\circ"$ is called  a
hyperring if $(R,+)$ is a canonical hypergroup , $(R,\circ)$ is a semihypergroup with
$r \circ 0=0 \circ r=0$ for all $r \in R$  and the hyperoperation $"\circ"$ is distributive with respect to $+$, i.e., $x \circ (y+z)=x \circ y+x \circ z$ and  $(x+y) \circ z=x \circ z+y \circ z$ for all $x,y,z \in R$. 

\begin{definition} \cite{f10}
A {\it multiplicative hyperring} is an abelian group $(R,+)$ in which a hyperoperation $\circ $ is defined satisfying the following: 
\begin{itemize}
\item[\rm(i)]~ for all $a, b, c \in R$, we have $a \circ (b \circ c)=(a \circ b) \circ c$;
\item[\rm(ii)]~for all $a, b, c \in R$, we have $a\circ (b+c) \subseteq a\circ b+a\circ c$ and $(b+c)\circ a \subseteq b\circ a+c\circ a$;
\item[\rm(iii)]~for all $a, b \in R$, we have $a\circ (-b) = (-a)\circ b = -(a\circ b)$.
\end{itemize}
\end{definition}
If in (ii) the equality holds then we say that the multiplicative hyperring is strongly distributive.

A non empty subset $I$ of a multiplicative hyperring $R$ is a {\it hyperideal} if
\begin{itemize}
\item[\rm(i)]~ If $a, b \in I$, then $a - b \in I$;

\item[\rm(iii)]~ If $x \in I $ and $r \in R$, then $rox \subseteq I$.
\end{itemize}
Let $(\mathbb{Z},+,\cdot)$ be the ring of integers. Corresponding to every subset $A \in P^\star(\mathbb{Z})$ such that $\vert A\vert \geq 2$, there exists a multiplicative hyperring $(\mathbb{Z}_A,+,\circ)$ with $\mathbb{Z}_A=\mathbb{Z}$ and for any $a,b\in \mathbb{Z}_A$, $a \circ b =\{a.r.b\ \vert \ r \in A\}$. 
\begin{definition} \cite{das} 
A proper hyperideal $P$ of $R$ is called a {\it prime hyperideal} if $x\circ y \subseteq P$ for $x,y \in R$ implies that $x \in P$ or $y \in P$. The intersection of all prime hyperideals of $R$ containing $I$ is called the prime radical of $I$, being denoted by $\sqrt{I}$. If the multiplicative hyperring $R$ does not have any prime hyperideal containing $I$, we define $\sqrt{I}=R$. 
\end{definition}
\begin{definition} \cite{ameri}
A proper hyperideal $I$ of $R$ is {\it maximal} in R if for
any hyperideal $J$ of $R$ with $I \subseteq J \subseteq R$ then $J = R$. Also, we say that $R$ is a local multiplicative hyperring, if it has just one maximal hyperideal.
\end{definition}
Let {\bf C} be the class of all finite products of elements of $R$ i.e. ${\bf C} = \{r_1 \circ r_2 \circ . . . \circ r_n \ : \ r_i \in R, n \in \mathbb{N}\} \subseteq P^{\ast }(R)$. A hyperideal $I$ of $R$ is said to be a {\bf C}-hyperideal of $R$ if, for any $A \in {\bf C}, A \cap I \neq \varnothing $ implies $A \subseteq I$.
Let I be a hyperideal of $R$. Then, $D \subseteq \sqrt{I}$ where $D = \{r \in R: r^n \subseteq I \ for \ some \ n \in \mathbb{N}\}$. The equality holds when $I$ is a {\bf C}-hyperideal of $R$(\cite {das}, proposition 3.2). In this paper, we assume that all hyperideals are {\bf C}-hyperideal.
\begin{definition} \cite{das}
A nonzero proper hyperideal $Q$ of $R$ is called a {\it primary hyperideal} if $x\circ y \subseteq Q$ for $x,y \in R$ implies that $x \in Q$ or $y \in \sqrt{Q}$. Since $\sqrt{Q}=P$ is a prime hyperideal of $R$ by Propodition 3.6 in \cite{das}, $Q$ is referred to as a P-primary hyperideal of $R$.
\end{definition}
\begin{definition} 
\cite{ameri} Let $R$ be commutative multiplicative hyperring and $e$ be an identity (i. e., for all $a \in R$, $a \in a\circ e$). An element $x \in R$ in is called {\it unit}, if there exists $y \in R$, such that $e \in x\circ y$.
\end{definition} 
\begin{definition} 
A hyperring $R$ is called an {\it integral hyperdomain}, if for all $x, y \in R$,
$0 \in x . y$ implies that $x = 0$ or $y = 0$. 
\end{definition}
\begin{definition} \cite{ameri}
Let $R$ be a multiplicative hyperring. The element $x \in R$ is
an idempotent if $x \in x^2$.
\end{definition} 
\begin{definition} \cite{ameri2}
An element $a \in R$ is said to be zero divizor if there exists $0 \neq b \in R$ such that $\{0\}=a \circ b.$
\end{definition}
\begin{definition} \cite{f10} 
Let $(R_1, +_1, \circ _1)$ and $(R_2, +_2, \circ_2)$ be two multiplicative hyperrings. A mapping $f$ from
$R_1$ into $R_2$ is said to be a {\it good homomorphism} if for all $x,y \in R_1$, $f(x +_1 y) =f(x)+_2 f(y)$ and $f(x\circ_1y) = f(x)\circ_2 f(y)$.
\end{definition}
\begin{definition} \cite{Ay}
A function $\delta$ is called a hyperideal expansion of $R$ if it assigns to each hyperideal $I$ of $R$ a hyperideal $\delta(I)$ such that it has the following properties: 
\begin{itemize}
\item[\rm{(1)}]~ $I \subseteq \delta(I)$
\item[\rm{(2)}]~if $I \subseteq J$ for any hyperideals $I, J$ of $R$, then $ \delta(I) \subseteq \delta(J)$.
\end{itemize}
\end{definition}
For example, consider the hyperideal expansions $\delta_0$,  $\delta_1$, $\delta_+$ and $\delta_*$ of $R$  defined with  $\delta_0(I)=I$,  $\delta_1(I)=\sqrt{I}$, $\delta_\star(I)=I+J$ (for some hyperideal $J$ of $R$) and $\delta_*(I)=(I:K)$ (for some hyperideal $K$ of $R$)  for all hyperideals $I$ of $R$, respectively. Also, let $\delta$ be a hyperideal expansion of $R$ and $I,J$ two hyperideals of $R$ such that $I \subseteq J$. Let  $\delta_q:R/I \longrightarrow R/I$ be defined by $\delta_q(J/I)=\delta(J)/I$. Then $\delta_q$ is a hyperideal expansion of $R/I$.
\begin{definition} \cite{ul}
Let $\delta$ be a hyperideal expansion of $R$. A hyperideal $I$ of $R$ is called a $\delta$-primary hyperideal if  $x,y \in R$ and $x \circ y \subseteq I$ imply either $x \in I$ or $y \in \delta(I)$.
\end{definition}
 \begin{definition} \cite{ul}
 Let $f: R_1 \longrightarrow R_2$ be a good hyperring homomorphism, $\delta$ and $\gamma$   hyperideal expansions of $R_1$ and $R_2$, respectively. Then $f$ is called a $\delta \gamma$-homomorphism if $\delta(f^{-1}(I_2)=f^{-1}(\gamma(I_2))$ for each hyperideal $I_2$ of $R_2$.
 \end{definition}
 Moreover, If $f$ is a $\delta \gamma$-epimorphism and $I$ is a hyperideal of $R$ with $Ker f \subseteq I$, then $\gamma(f(I))=f(\delta(I))$. 
\section{ 2-prime hyperideals  }
\begin{definition}  Let $I$ be a proper hyperideal of $R$. We say that $I$ is 2-prime if for
all $x, y \in R$, $x \circ y \subseteq R$  implies $x^2 \subseteq I$ or $y^2 \subseteq I$.
\end{definition}
\begin{example}
Let $(\mathbb{Z},+,.)$ be the ring of integers.  In the multiplicative hyperring $(\mathbb{Z}_A,+,\circ)$ with $A=\{5,7\}$ and the hyperoperation $a \circ b=\{a.r.b \ \vert \ r \in A\}$ for $a, b \in \mathbb{Z}_A$ , the principal hyperideal $3\mathbb{Z}$ is 2-prime.
\end{example}
\begin{theorem} \label{31}
Let $I$ be a hyperideal of $R$. Then
\begin{itemize} 
\item [\rm{(1)}]~ If $I$ is a 2-prime hyperideal, then $\sqrt{I}$ is a prime hyperideal.

\item [\rm{(2)}]~ If $P$ is a prime hyperideal of $R$, then $P^2$ is a 2-prime hyperideal of $R$.

\item [\rm{(3)}]~ Let $R_1$ and $ R_2$ be two multiplicative hyperrings such that $f:R_1 \longrightarrow R_2$ is a good homomorphism. If $I_2$ is a 2-prime hyperideal of $R_2$, then $f^{-1}(I_2)$ is a 2-prime hyperideals of $R_1$.

\item [\rm{(4)}]~Let $R_1$ and $ R_2$ be two multiplicative hyperrings such that $f:R_1 \longrightarrow R_2$ is a good epimorphism. If $I_1$ is a 2-prime hyperideal of $R_1$ with $Ker f \subseteq I_1$, then $f(I_1)$ is a 2-prime hyperideal of $R_2$.

\item [\rm{(5)}]~ If $I$ is a 2-prime hyperideal and $I_1,I_2$ are two subsets of $R$ with $I_1 \circ I_2 \subseteq I$, then
$\bigcup_{x \in  I_1} x^2 \subseteq I$ or $\bigcup_{y \in  I_2} y^2 \subseteq I$.


\end{itemize}
\end{theorem}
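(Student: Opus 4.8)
The plan is to dispatch (2)--(5) by direct elementwise arguments and to reserve the real work for (1). For (2), the key observation is that $P^2=P\circ P\subseteq P$, so any $x,y$ with $x\circ y\subseteq P^2$ already satisfy $x\circ y\subseteq P$; primeness of $P$ gives $x\in P$ or $y\in P$, and squaring (using $\{x\}\subseteq P\Rightarrow x^2=x\circ x\subseteq P\circ P=P^2$) yields $x^2\subseteq P^2$ or $y^2\subseteq P^2$, while $P^2\subseteq P\subsetneq R$ keeps it proper. For (5) I would argue contrapositively: if some $x_0\in I_1$ has $x_0^2\not\subseteq I$, then for every $y\in I_2$ the inclusion $x_0\circ y\subseteq I_1\circ I_2\subseteq I$ together with 2-primeness forces $y^2\subseteq I$, whence $\bigcup_{y\in I_2}y^2\subseteq I$.

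For the homomorphism statements I would use the identity $f(x\circ y)=f(x)\circ f(y)$ throughout. In (3), $x\circ y\subseteq f^{-1}(I_2)$ gives $f(x)\circ f(y)=f(x\circ y)\subseteq I_2$; applying 2-primeness of $I_2$ and rewriting $f(x)^2=f(x\circ x)=f(x^2)$ shows $x^2\subseteq f^{-1}(I_2)$ or $y^2\subseteq f^{-1}(I_2)$. In (4), surjectivity lets me write $x'=f(x)$, $y'=f(y)$; from $f(x\circ y)\subseteq f(I_1)$ I would recover $x\circ y\subseteq I_1$ using $\mathrm{Ker}\,f\subseteq I_1$ (for $t\in x\circ y$, pick $s\in I_1$ with $f(t)=f(s)$, so $t-s\in\mathrm{Ker}\,f\subseteq I_1$ and hence $t\in I_1$, as $(R,+)$ is a group and $I_1$ is additively closed); then 2-primeness of $I_1$ followed by applying $f$ finishes. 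Properness of $f(I_1)$ is ensured by $\mathrm{Ker}\,f\subseteq I_1$, since $f(I_1)=R_2$ would force $I_1=f^{-1}(R_2)=R_1$.

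The substantive part is (1). The classical template is: if $a\circ b\subseteq\sqrt I$ then $(a\circ b)^n\subseteq I$ for some $n$, so $a^n\circ b^n\subseteq I$, and 2-primeness yields $(a^n)^2\subseteq I$ or $(b^n)^2\subseteq I$, i.e. $a\in\sqrt I$ or $b\in\sqrt I$. Two points need care here. First, to pass from $a\circ b\subseteq\sqrt I$ to a uniform power lying in $I$, I would use that $I$ is a {\bf C}-hyperideal, so $r^m\cap I\neq\varnothing$ forces $r^m\subseteq I$ and $\sqrt I=\{r:r^m\subseteq I \text{ for some } m\}$; writing the finite set $a\circ b=\{c_1,\dots,c_k\}$ with $c_i^{m_i}\subseteq I$ and taking $N=\sum_i(m_i-1)+1$, a pigeonhole argument (each length-$N$ product drawn from $a\circ b$ repeats some $c_i$ at least $m_i$ times, which can be grouped by commutativity as $c_i^{m_i}\circ(\text{rest})\subseteq I$) gives $(a\circ b)^N=a^N\circ b^N\subseteq I$.

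Second, 2-primeness is stated for elements, while $a^N$ and $b^N$ are sets, so I cannot apply it verbatim. Here I would invoke (5) with $I_1=a^N$ and $I_2=b^N$: from $a^N\circ b^N\subseteq I$ it returns $\bigcup_{u\in a^N}u^2\subseteq I$ or $\bigcup_{v\in b^N}v^2\subseteq I$, i.e. $a^N\subseteq\sqrt I$ or $b^N\subseteq\sqrt I$. Finally, applying the pigeonhole uniformization once more to the finite set $a^N\subseteq\sqrt I$ produces $a^{NM}\subseteq I$ for some $M$, so $a\in\sqrt I$ (symmetrically $b\in\sqrt I$), proving $\sqrt I$ prime. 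I expect this set-versus-element gap to be the main obstacle: that a single element of a hyperproduct does not control the whole product is exactly what forces the detour through the {\bf C}-hyperideal ``all-or-nothing'' property, the pigeonhole uniformization, and the elementwise statement (5); the finiteness of the hyperproducts $a\circ b$ (as in the motivating rings $\mathbb{Z}_A$) is the implicit hypothesis that legitimizes the pigeonhole step.
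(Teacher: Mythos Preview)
Your arguments for (2)--(5) match the paper's almost verbatim, and your approach to (1) follows the same template the paper uses: from $x\circ y\subseteq\sqrt I$ pass to $x^n\circ y^n\subseteq I$, apply 2-primeness, and conclude $x\in\sqrt I$ or $y\in\sqrt I$. The paper, however, simply asserts both the uniform-exponent step and the set-level conclusion (writing ``$x^{2n}\subseteq I$ or $y^{2n}\subseteq I$'' directly) without the pigeonhole/finiteness argument or the detour through (5) that you supply, so your version is in fact more scrupulous than the original on exactly the two points you flagged.
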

\begin{proof}
(1) Let the hyperideal $I$ be 2-prime. If $x \circ y \subseteq \sqrt{I}$ for $x,y \in R$ then for some positive integer $n$, $x^n\circ y^n \subseteq I$. Since hyperideal $I$ is 2-prime, we have $x^{2n} \subseteq I$ or $y^{2n} \subseteq I$. Thus we get $x \in \sqrt{I}$ or $y \in \sqrt{I}$ which means hyperideal $\sqrt{I}$  is prime.

(2) Since $P^2 \subseteq P$, then we are done.

(3) Let $x\circ y \subseteq f^{-1}(I_2)$ for $x,y \in R_1$. Hence $f(x \circ y) \subseteq I_2$. Since the hyperideal $I_2$ is 2-prime, then $f^2(x)=f(x^2) \subseteq I_2$ or $f^2(y)=f(y^2) \subseteq I_2$. Thus $x^2 \subseteq f^{-1}(I_2)$ or $y^2 \subseteq f^{-1}(I_2)$ i.e.,  the hyperideal $f^{-1}(I_2)$ is 2-prime.

(4) Let for some  elements $x_2,y_2 \in R_2$, $x_2 \circ y_2  \subseteq f(I_1)$. Since $f$ is an epimorphism, there exist $x_1,y_1 \in R_1$ such that $f(x_1)=x_2, f(y_1)=y_2$ and so $f(x_1 \circ y_1 )=x_2 \circ y_2 \subseteq f(I_1)$. Now take any $u \in x_1 \circ y_1$. Then we get $f(u) \in f(x_1 \circ y_1) \subseteq f(I_1)$ and so $f(u) = f(w)$ for some $w \in I_1$. This implies that $f(u-w) = 0 $, that is, $u-w \in Ker f \subseteq I_1$ and so $u \in I_1$. Since $I_1$ is a ${\bf C}$-hyperideal of $R_1$, then we conclude that $x_1 \circ y_1 \subseteq I_1$. Since $I_1$ is a 2-prime hyperideal of $R_1$ then $x_1^2  \subseteq I_1$ which implies $x_2^2 =f(x_1^2 ) \subseteq f(I_1)$ or $y_1^2 \subseteq I_1$ which implies $y_2^2=f(y_1^2) \subseteq f( I_1)$.Consequently, $f(I_1)$ is a 2-prime hyperideal of $R_2$.

(5) Let $\bigcup_{x \in  I_1} x^2 \nsubseteq I$ or $\bigcup_{y \in  I_2} y^2 \nsubseteq I$. Hence there exist $x_0\in I_1$ and $y_0 \in I_2$ with $x_0^2 \nsubseteq I$ and $y_0^2 \nsubseteq I$. Since hyperideal $I$ is 2-prime, then $x_0 \circ y_0 \nsubseteq I$ which is a contradiction.
\end{proof}
Let $I$ be a 2-prime hyperideal of $R$. Since $\sqrt{I}=P$ is
a prime hyperideal of R by Theorem  \ref{31} (1), $I$ is referred to as a $P$-2-prime hyperideal of $R$.
\begin{theorem}
Let $I$ be a $P$-2-prime hyperideal of $R$. Then $(I:x^2)$ for all $x \in R-I$ is a   $P$-2-prime hyperideal of $R$.
\end{theorem}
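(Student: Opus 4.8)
The plan is to work throughout with the residual (colon) hyperideal $(I:x^2)=\{r\in R:\ r\circ x^2\subseteq I\}$, first recording the routine facts that it is a hyperideal and that $I\subseteq(I:x^2)$ (for $r\in I$ one has $r\circ x^2\subseteq I$ since $I$ is a hyperideal). Monotonicity of the radical then gives $P=\sqrt I\subseteq\sqrt{(I:x^2)}$ for free, so the statement reduces to two claims: that $(I:x^2)$ is $2$-prime, and that $\sqrt{(I:x^2)}\subseteq P$. The latter simultaneously delivers properness, since $(I:x^2)\subseteq\sqrt{(I:x^2)}=P\subsetneq R$, and upgrades the radical equality to $\sqrt{(I:x^2)}=P$, i.e. the ``$P$-'' part of the conclusion.

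For the $2$-prime claim I would start from $a\circ b\subseteq(I:x^2)$ and rewrite the defining condition, using commutativity and associativity of $\circ$, as $(a\circ x)\circ(b\circ x)=a\circ b\circ x^2\subseteq I$. Feeding the two subsets $a\circ x$ and $b\circ x$ into Theorem~\ref{31}(5) yields $\bigcup_{c\in a\circ x}c^2\subseteq I$ or $\bigcup_{d\in b\circ x}d^2\subseteq I$. In the first case the goal is to upgrade this to $a^2\circ x^2\subseteq I$, that is $a^2\subseteq(I:x^2)$; the second case gives $b^2\subseteq(I:x^2)$ symmetrically. This upgrade is where I expect the main obstacle to lie: $a^2\circ x^2$ equals $(a\circ x)\circ(a\circ x)$, a union of products $c_1\circ c_2$ of possibly \emph{distinct} elements of $a\circ x$, whereas Theorem~\ref{31}(5) only controls the diagonal squares $c^2$. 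Bridging the two is exactly where the standing hypothesis that every hyperideal is a $\mathbf C$-hyperideal should come into play: for $c_1,c_2\in a\circ x$ one has $(c_1\circ c_2)^2=c_1^2\circ c_2^2\subseteq I$, and the task is to promote this to $c_1\circ c_2\subseteq I$ via the $\mathbf C$-property (note that $(c_1\circ c_2)^2\subseteq I$ on its own only yields $c_1\circ c_2\subseteq\sqrt I=P$, so some genuine use of the $\mathbf C$-hyperideal structure is needed here). I anticipate this promotion being the technical heart of the argument.

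For $\sqrt{(I:x^2)}\subseteq P$ I would take $r\in\sqrt{(I:x^2)}$, so $r^n\subseteq(I:x^2)$ for some $n$, i.e. $r^n\circ x^2\subseteq I\subseteq P$. Since $P$ is prime and a $\mathbf C$-hyperideal, the product $r^n\circ x^2\in\mathbf C$ (built from $n$ copies of $r$ and two of $x$) lies in $P$ only if one of its factors does, giving $r\in P$ or $x\in P$. The clean case is $x\notin\sqrt I$, where this forces $r\in P$ and closes the argument at once. The second place I would be careful is the case $x\in\sqrt I\setminus I$: here one genuinely needs $x^2\nsubseteq I$, for otherwise $x^2\subseteq I$ would give $r\circ x^2\subseteq I$ for all $r$ and hence $(I:x^2)=R$, destroying properness; under that extra input one extracts $r\in P$ by peeling the finitely many factors of $x$ against the radical. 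I would therefore structure the write-up so that the hypothesis on $x$ is invoked precisely at this step, and flag explicitly where properness of $(I:x^2)$ is being used.
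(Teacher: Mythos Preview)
Your plan for the $2$-prime claim is exactly the paper's: rewrite $u\circ w\subseteq(I:x^2)$ as $(u\circ x)\circ(w\circ x)\subseteq I$ and invoke the subset form of $2$-primeness. However, your proposed ``bridging'' step via $(c_1\circ c_2)^2\subseteq I$ does not work as written: knowing $(c_1\circ c_2)^2\subseteq I$ gives no element of $c_1\circ c_2$ in $I$, so the $\mathbf C$-property cannot be applied to $c_1\circ c_2$ to conclude $c_1\circ c_2\subseteq I$. The correct (and much shorter) use of the $\mathbf C$-hypothesis is at the level of the whole product $a^2\circ x^2$: once Theorem~\ref{31}(5) gives $\bigcup_{c\in a\circ x}c^2\subseteq I$, fix any $c_0\in a\circ x$; then $c_0^2\subseteq I$ and $c_0^2\subseteq (a\circ x)^2=a^2\circ x^2\in\mathbf C$, so $(a^2\circ x^2)\cap I\neq\varnothing$ and hence $a^2\circ x^2\subseteq I$, i.e.\ $a^2\subseteq(I:x^2)$. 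The paper simply writes ``$(u\circ x)^2\subseteq I$'' at this point, tacitly using this one-line argument; no pairwise analysis is needed.

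For the radical equality the paper takes a different and shorter route than yours: instead of arguing at the level of $\sqrt{(I:x^2)}$, it shows $(I:x^2)\subseteq P$ directly. For $y\in(I:x^2)$ one has $y\circ x^2\subseteq I$; applying $2$-primeness (in the subset form, with the $\mathbf C$-trick above) under the hypothesis $x\notin P=\sqrt I$ forces $y^2\subseteq I$, hence $y\in P$. Then $I\subseteq(I:x^2)\subseteq P$ squeezes the radicals to $P$. Note, however, that the paper's proof actually uses $x\in R\setminus P$ at this step, even though the stated hypothesis is $x\in R\setminus I$. Your careful case split around $x\in\sqrt I\setminus I$ is precisely detecting this discrepancy; indeed, if $x^2\subseteq I$ the conclusion fails outright since $(I:x^2)=R$, and even when $x^2\nsubseteq I$ but $x^4\subseteq I$ the containment $(I:x^2)\subseteq P$ need not hold. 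Under the operative hypothesis $x\notin P$ your second case and the unjustified ``peeling'' disappear, and the paper's direct containment argument suffices.
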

\begin{proof}
Let $y \in (I:x^2)$ for $x \in R-P$. This means that $y \circ x^2 \subseteq I$. Since the hyperideal $I$ is $P$-2-prime and $x \notin P=\sqrt{I}$, then $y^2 \subseteq I$ which means $y \in P$. Therefore we have $ (I:x^2) \subseteq P$. Since $I \subseteq  (I:x^2) \subseteq P$, then $\sqrt{I} \subseteq \sqrt{(I:x^2)} \subseteq \sqrt{P}$. Since $\sqrt{I}=\sqrt{P}=P$, then we have $\sqrt{(I:x^2)}=P$. Assume that for $u,w \in R$, $u \circ w \subseteq (I:x^2)$ such that $w^2 \nsubseteq (I:x^2)$. We have $u \circ w \circ x^2=(u \circ x)\circ (w \circ x) \subseteq I$. Since the hyperideal $I$ is $P$-2-prime and $(w \circ x)^2 \nsubseteq I$, then $(u \circ x)^2=u^2 \circ x^2 \subseteq I$. This means $u^2 \subseteq (I:x^2)$. Consequently, $(I:x^2)$ is a $P$-2-prime hyperideal of $R$. 
\end{proof} \cite{ameri5}
Recall that an element $x \in R$ is called regular if there exists $r \in R$ such that $x \in x^2 \circ r$. So, we can deﬁne that $R$ is regular multiplicative hyperring, if all of elements
in $R$ are regular elements. 
\begin{theorem}
Let $R$ be a regular multiplicative hyperring and $I$ be a 2-prime hyperideal of $R$. Then $I$ is prime.
\end{theorem}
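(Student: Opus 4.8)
The plan is to show that, under regularity, the apparently weaker conclusion of the 2-prime condition (``$x^2\subseteq I$'') can always be upgraded to the prime conclusion (``$x\in I$''). The whole argument therefore hinges on a single observation: in a regular multiplicative hyperring, $x^2\subseteq I$ already forces $x\in I$ for every hyperideal $I$. Once this is established, primeness follows immediately from the definition of a 2-prime hyperideal.

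First I would isolate and prove this key implication. Fix $x\in R$ with $x^2\subseteq I$. By regularity there exists $r\in R$ with $x\in x^2\circ r$. Since $x^2\subseteq I$ and $I$ is a hyperideal, the absorption property gives $x^2\circ r\subseteq I$; here one uses that the hyperoperation extends to subsets via $x^2\circ r=\bigcup_{a\in x^2}a\circ r$, and that each $a\in x^2\subseteq I$ satisfies $a\circ r\subseteq I$ by the defining property of a hyperideal. Combining these, $x\in x^2\circ r\subseteq I$, so $x\in I$.

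With this in hand, the conclusion is direct. Suppose $x\circ y\subseteq I$ for some $x,y\in R$. Since $I$ is 2-prime, either $x^2\subseteq I$ or $y^2\subseteq I$. Applying the implication just proved in the first case yields $x\in I$, and in the second case $y\in I$. In either case the prime condition holds, so $I$ is a prime hyperideal.

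I do not expect a genuine obstacle here; the content is essentially the one-line reduction supplied by regularity. The only point requiring care is the set-level manipulation in the key step: one must verify that the hyperideal absorption condition, originally stated for a single element of $I$, propagates correctly to the subset $x^2$, so that $x^2\circ r\subseteq I$ is legitimate rather than merely $x^2\circ r\cap I\neq\varnothing$. (Since the paper assumes throughout that all hyperideals are $\mathbf{C}$-hyperideals, this subset absorption is in any case available.) Everything else is formal.
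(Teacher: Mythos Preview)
Your proposal is correct and follows essentially the same idea as the paper: both use regularity to obtain $x\in x^2\circ r\subseteq I$ from $x^2\subseteq I$, and then feed this back into the 2-prime definition. The paper phrases the first half contrapositively (assume $a\notin I$, deduce $a^2\nsubseteq I$, hence $b^2\subseteq I$, and then invokes $b\in\sqrt{I}=I$), whereas you apply the implication $x^2\subseteq I\Rightarrow x\in I$ symmetrically to both disjuncts; the content is the same.
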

\begin{proof}
 Let $I$ be a 2-prime hyperideal of $R$. Suppose that $a \circ b \subseteq I$ with $a \notin I$ for some $a,b \in R$. Since $R$ is regular, then there exists $r \in R$ such that $a \in a^2 \circ r$. Let $a^2 \subseteq I$. Then $a \in a^2 \circ r \subseteq I$ which is a contradiction. Therefore $a^2 \nsubseteq I$. Since $I$ is a 2-prime hyperideal of $R$, then $b^2 \subseteq I$. This means $b \in \sqrt{I}=I$. Consequently, the hyperideal  $I$ is prime.
\end{proof}
Recall that a proper hyperideal $I$ of $R$ is called semiprime if whenever $x^k\circ y \subseteq I$ for some $x,y \in R$ and $k \in \mathbb{Z}^+$, then $x \circ y \subseteq I$. Note that if $I$ is a semiprime hyperideal of $R$ such that  $J^n \subseteq I$ for some hyperideal $J$ of $R$ and $n \in \mathbb{N}$, then $J \subseteq I$(Proposition 2.4 in \cite{far}). Every prime hyperideal is a semiprime hyperideal, but the converse is not true in general. For example, the hyperideal $<6>$ of $\mathbb{Z}_{30}$ is semiprime, but it is not prime (see Example 2.3 in \cite{far}).

\begin{theorem}
Let $I$ be a semiprime hyperideal of $R$ and $\delta$ be a hyperideal expansion of $R$. Then  $I$ is a 2-prime hyperideal of $R$ if and only if $I$ is a prime hyperideal of $R$.
\end{theorem}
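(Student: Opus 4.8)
The plan is to prove the two implications separately; note first that the hyperideal expansion $\delta$ does not appear on either side of the equivalence, so it plays no role and may be ignored throughout. The implication ``prime $\Rightarrow$ 2-prime'' is immediate and does not even use semiprimeness: if $I$ is prime and $x\circ y\subseteq I$ for some $x,y\in R$, then $x\in I$ or $y\in I$, say $x\in I$, and applying the hyperideal axiom $x\in I,\ r\in R\Rightarrow r\circ x\subseteq I$ with $r=x$ yields $x^{2}=x\circ x\subseteq I$. Hence $I$ is 2-prime.

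For the converse ``2-prime $+$ semiprime $\Rightarrow$ prime'' I would argue through the prime radical, which keeps the two hypotheses cleanly separated. Since $I$ is 2-prime, Theorem \ref{31}(1) already guarantees that $\sqrt{I}$ is a prime hyperideal. It therefore suffices to show that semiprimeness forces $\sqrt{I}=I$: once this is known, $I=\sqrt{I}$ is prime and we are done. Because every hyperideal is assumed to be a {\bf C}-hyperideal, we have $\sqrt{I}=\{r\in R:\ r^{n}\subseteq I\ \text{for some}\ n\in\mathbb{N}\}$, so only the inclusion $\sqrt{I}\subseteq I$ needs proof. Given $r$ with $r^{n}\subseteq I$, I would pass to the principal hyperideal $J=\langle r\rangle$ and invoke the note recalled just before the theorem (Proposition 2.4 of \cite{far}): if $J^{n}\subseteq I$ then $J\subseteq I$, and in particular $r\in I$.

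The crux of the argument is thus the verification that $J^{n}\subseteq I$ whenever $r^{n}\subseteq I$. I expect to handle this by writing a general element of $J=\langle r\rangle$ as a finite sum of terms $k r$ ($k\in\mathbb{Z}$) and $s\circ r$ ($s\in R$), expanding the hyperproduct of $n$ such sums via the (sub)distributive law, and observing that each resulting summand lies in $R\circ r^{n}$: commutativity of $\circ$ lets one gather the $n$ copies of $r$ into $r^{n}$, and since $r^{n}\subseteq I$ and $I$ absorbs products by elements of $R$, every summand, and hence $J^{n}$, is contained in $I$. This bookkeeping is the only non-formal step and is where I expect the main obstacle to lie.

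It is worth stressing why the hyperideal form of semiprimeness is genuinely needed. The elementwise form $x^{k}\circ y\subseteq I\Rightarrow x\circ y\subseteq I$, applied with $y=x$, merely cycles between $x^{2}\subseteq I$ and itself and never descends to $x\in I$; absent a multiplicative identity one cannot take $y=e$ to break the loop. Hence the passage from $r^{n}\subseteq I$ to $r\in I$ must route through $J=\langle r\rangle$ and Proposition 2.4 of \cite{far}, rather than through the defining condition of semiprimeness alone.
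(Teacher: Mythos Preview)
The paper states this theorem without proof, so there is no argument to compare against directly; the authors evidently regard it as immediate from the material just recalled (the definition of semiprime and Proposition~2.4 of \cite{far}). Your approach is correct and is precisely the natural one: the reverse implication is trivial, and for the forward implication you combine Theorem~\ref{31}(1), which makes $\sqrt{I}$ prime, with the fact that semiprimeness forces $\sqrt{I}=I$.

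Your discussion of why the elementwise semiprime condition $x^{k}\circ y\subseteq I\Rightarrow x\circ y\subseteq I$ cannot by itself descend from $x^{2}\subseteq I$ to $x\in I$, and why one must therefore route through $J=\langle r\rangle$ and Proposition~2.4 of \cite{far}, is more careful than the paper. The bookkeeping you flag as the ``main obstacle'' does go through: every element of $\langle r\rangle$ is a finite sum of terms $mr$ ($m\in\mathbb{Z}$) and $t\in s\circ r$ ($s\in R$), so by subdistributivity and commutativity each element of $\langle r\rangle^{n}$ lies in a finite sum of sets of the form $\mathbb{Z}\cdot r^{n}$ or $R\circ r^{n}$, all of which sit inside $I$ once $r^{n}\subseteq I$. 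If one is willing to assume an identity $e$ (as the paper sometimes does tacitly), the argument shortens: from $r^{2}\subseteq I$ one has $r^{2}\circ e\subseteq I$, whence $r\circ e\subseteq I$ by semiprimeness, and $r\in r\circ e\subseteq I$.
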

\begin{definition}
A proper hyperideal $I$ of $R$ is called semiprimary if for
all $x,y \in R$ such that $x \circ y \subseteq I$, then either $x$ or $y$ lies in $\sqrt{I}$.
\end{definition} 
\begin{example}
Let $(\mathbb{Z},+,.)$ be the ring of integers.  In the multiplicative hyperring $(\mathbb{Z}_A,+,\circ)$ with $A=\{2,3\}$ and the hyperoperation $a \circ b=\{a.r.b \ \vert \ r \in A\}$ for $a, b \in \mathbb{Z}_A$ , the  hyperideal $11\mathbb{Z}$ is semiprimary.
\end{example}
\begin{theorem}
Every 2-prime hyperideal of $R$ is a semiprimary hyperideal of $R$.
\end{theorem}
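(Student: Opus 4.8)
The plan is to reduce the semiprimary condition directly to the 2-prime condition by exploiting the explicit description of the prime radical available for C-hyperideals. First I would recall the standing assumption of the paper that every hyperideal is a C-hyperideal, together with the fact recorded in the preliminaries that in this setting the radical admits the closed form $\sqrt{I} = \{r \in R : r^n \subseteq I \text{ for some } n \in \mathbb{N}\}$ (the equality $D = \sqrt{I}$). The consequence I need is the immediate implication that whenever a single power satisfies $r^2 \subseteq I$, one has $r \in \sqrt{I}$ by taking $n = 2$.

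With this in hand, the argument is a one-line deduction. Suppose $x \circ y \subseteq I$ for some $x, y \in R$. Since $I$ is 2-prime, the defining property gives $x^2 \subseteq I$ or $y^2 \subseteq I$. In the first case the radical description yields $x \in \sqrt{I}$, and in the second case it yields $y \in \sqrt{I}$; in either event at least one of $x$ and $y$ lies in $\sqrt{I}$, which is exactly the defining requirement of a semiprimary hyperideal. As $I$ is proper by hypothesis, it is a proper hyperideal satisfying the semiprimary condition, completing the argument.

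I do not expect any genuine obstacle in this proof; it is essentially formal. The only point requiring care is that the passage from $x^2 \subseteq I$ to $x \in \sqrt{I}$ relies on the containment $\{r : r^n \subseteq I\} \subseteq \sqrt{I}$, and that this is an equality precisely because of the blanket C-hyperideal assumption. No properties of the hyperoperation beyond the 2-prime hypothesis and this radical characterization are needed, so the statement follows cleanly.
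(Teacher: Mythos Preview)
Your proof is correct and is exactly the immediate deduction the paper has in mind; indeed, the paper states this theorem without proof. One small simplification: you only need the inclusion $D \subseteq \sqrt{I}$ (not the equality), and the preliminaries record that this inclusion holds for every hyperideal, so the C-hyperideal hypothesis is not actually needed at this point.
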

\begin{theorem}
Let $\mathbb{Z}_A$ be a multiplicative hyperring of integers and $a(>1)$ be a positive integer  such that each element $x$ of $A$ and $a$ are coprime and  $x^2 \in A$. Then $I=<a>$ is a 2-prime hyperideal of $\mathbb{Z}_A$ if and only if $I=<p^n>$ for some positive integer $n$ and an irreducible   element $p$ of $\mathbb{Z}_A$ or $p = 0$.
\end{theorem}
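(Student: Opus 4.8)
The plan is to collapse the hyperstructural 2-prime condition on $\langle a\rangle$ into an elementary divisibility statement in $\mathbb{Z}$, and then settle that statement with $p$-adic valuations. First I would record the concrete description $\langle a\rangle = a\mathbb{Z}$: since the hyperoperation on $\mathbb{Z}_A$ is $u\circ v=\{urv : r\in A\}$, one checks directly that $a\mathbb{Z}$ is an additive subgroup closed under $\circ$ with $\mathbb{Z}_A$, and that it is a {\bf C}-hyperideal (a stray factor $r\in A$ is coprime to $a$, hence invisible to divisibility by $a$), so it is the smallest hyperideal containing $a$. The crucial translation is then: for $x,y\in\mathbb{Z}$ we have $x\circ y=\{rxy : r\in A\}\subseteq\langle a\rangle$ iff $a\mid rxy$ for every $r\in A$, and because each $r\in A$ is coprime to $a$ this is equivalent to $a\mid xy$. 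Applying the same remark to $x\circ x=\{rx^2 : r\in A\}$ (here $x^2$ is the ordinary integer square) shows that the hyper-square $x^2\subseteq\langle a\rangle$ is equivalent to $a\mid x^2$. Consequently $\langle a\rangle$ is 2-prime exactly when, for all $x,y\in\mathbb{Z}$, $a\mid xy$ implies $a\mid x^2$ or $a\mid y^2$. (Since $a>1$ the degenerate case $p=0$, i.e. $\langle 0\rangle$, does not arise.)

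For the implication assuming $a=p^{n}$ with $p$ irreducible (a prime of $\mathbb{Z}$), I would argue by valuations. If $a\mid xy$ then $v_p(x)+v_p(y)=v_p(xy)\ge n$, so $\max\{v_p(x),v_p(y)\}\ge n/2$, whence $2v_p(x)\ge n$ or $2v_p(y)\ge n$; that is, $p^{n}\mid x^2$ or $p^{n}\mid y^2$. By the dictionary above this gives $x^2\subseteq\langle a\rangle$ or $y^2\subseteq\langle a\rangle$, so $\langle a\rangle$ is 2-prime.

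For the converse I would use the contrapositive. If $a>1$ is not a power of a single irreducible, then $a$ factors as $a=d_1 d_2$ with $d_1,d_2>1$ and $\gcd(d_1,d_2)=1$ (take $d_1$ to be the full power of one prime dividing $a$ and $d_2=a/d_1$). Putting $x=d_1$, $y=d_2$ gives $xy=a$, so $x\circ y\subseteq\langle a\rangle$, while coprimality forces $a\nmid d_1^2$ and $a\nmid d_2^2$, i.e. $x^2\not\subseteq\langle a\rangle$ and $y^2\not\subseteq\langle a\rangle$, contradicting 2-primeness; hence $a$ must be of the form $p^{n}$. Alternatively this direction follows from Theorem \ref{31}(1): a routine computation gives $\sqrt{\langle a\rangle}=\langle\mathrm{rad}(a)\rangle$, and $\langle m\rangle$ is prime in $\mathbb{Z}_A$ iff $m$ is irreducible, so primeness of the radical forces $\mathrm{rad}(a)$ to be irreducible, i.e. $a$ to have a single prime factor.

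The valuation bookkeeping and the verification that $a\mathbb{Z}$ is a {\bf C}-hyperideal are routine. The step I expect to require the most care, and where the stated hypotheses on $A$ are spent, is the translation from containments of hypersets to ordinary divisibility: it is precisely the coprimality of each $r\in A$ with $a$ that lets one pass between $x\circ y\subseteq\langle a\rangle$ and $a\mid xy$, and between $x^2\subseteq\langle a\rangle$ and $a\mid x^2$, while the closure condition $x^2\in A$ keeps the relevant products inside $\mathbb{Z}_A$ so that an irreducible element of $\mathbb{Z}_A$ coincides with a prime integer. Once this dictionary is secured, the theorem is purely number-theoretic and both directions are short.
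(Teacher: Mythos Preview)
Your argument is correct and follows essentially the same route as the paper: for the forward direction both you and the paper split $a$ into coprime factors (the paper takes $x=p_1^{m_1}$ and $y=p_2^{m_2}\cdots p_n^{m_n}$) and observe that neither square can be divisible by $a$, and for the converse both use the $p$-adic valuation inequality $2\max\{v_p(x),v_p(y)\}\ge v_p(x)+v_p(y)\ge n$. Your explicit upfront dictionary reducing the hyperset containments to the ordinary divisibility statement ``$a\mid xy\Rightarrow a\mid x^2$ or $a\mid y^2$'' is cleaner than the paper's presentation, but the underlying mathematics is identical.
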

\begin{proof}
$( \Longrightarrow )$ Let $I=<a>$ be a 2-prime hyperideal of $\mathbb{Z}_A$. Assume that $a$ is not
an irreducible element. Then $a=p_1^{m_1}p_2^{m_2}...p_n^{m_n}$ is a representation of a as a product of distinct prime integers  $p_i$ such that  $m_i$ is a positive integer  for $ 1 \leq i \leq n$. Put $x=p_1^{m_1}$ and $y=p_2^{m_2}...p_n^{m_n}$
by Proposition 4.13 in \cite{das}. For every $c \in A$, $x c y \in <a>$ which implies $x \circ y \subseteq <a>$. Since $<a>$ is 2-prime, then we have $x^2 \subseteq <a>$ or $y^2 \subseteq <a>$. If $x^2 \subseteq <a>$ then for all $d \in A$ we have $x^2d \in <a>$. Thus we obtain $x^2d=p_1^{2m_1}d=\alpha a=\alpha p_1^{m_1}p_2^{m_2}...p_n^{m_n}$ for some $\alpha \in \mathbb{Z}_A$. Since each element of $A$ and $a$ is coprime, then for some $2 \leq i \leq n$ we get $p_i \vert p_1$ which is contradiction. If $y^2 \subseteq <a>$ then for every $e \in A$, $y^2e \in <a>$ which implies $y^2e=(p_2^{m_2}...p_n^{m_n})^2e=\beta a=\beta p_1^{m_1}p_2^{m_2}...p_n^{m_n}$ for $\beta \in \mathbb{Z}$ which means $p_1 \vert p_j$ for some $2 \leq j \leq n$ which is a contradiction.

$(\Longleftarrow) $ Let $I=<p^n>$ for some positive integer $n$ and an irreducible element $p \in \mathbb{Z}_A$. Let $x \circ y \subseteq I$ for some $x,y \in \mathbb{Z}$. Then for $\alpha \in A$, $xy\alpha \in I$ which implies $x=ap^s$ and $y\alpha=bp^t$ for some $a,b \in \mathbb{Z}$ with $n \leq t+s$. If $2s <n$ and $2t <n$, then $2s+2t<2n$ which is a contradiction. Therefore $x^2\beta \in I$ for some $\beta \in A$ or $y^2\alpha^2 \in I$ which implies $x^2 \subseteq I$ or $y^2 \subseteq I$. Consequently, $I$ is a 2-prime hyperideal of $\mathbb{Z}$.
\end{proof}
Let $R$ be a multiplicative hyperring. Then we call $M_n(R)$ as
the set of all hypermatixes of $R$. Also, for all $A = (A_{ij})_{n \times n}, B = (B_{ij})_{n \times n} \in P^\star (M_n(R)), A \subseteq B$ if and only if $A_{ij} \subseteq B_{ij}$\cite{ameri}.  

\begin{theorem} \label{11126} 
Let $R$ be a multiplicative hyperring with scalar
identity 1 and $I$ be a hyperideal of $R$. If $M_n(I)$ is a 2-prime hyperideal of
$M_n(R)$, then $I$ is an 2-prime hyperideal of $R$. 
\end{theorem}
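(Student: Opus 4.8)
The plan is to pull the 2-prime condition on $M_n(I)$ back to $R$ by testing it on the \emph{corner} hypermatrices that carry a single entry in the $(1,1)$-position. First I would observe that $I$ must be proper: if $I=R$ then $M_n(I)=M_n(R)$ would fail to be proper, contradicting that a 2-prime hyperideal is by definition proper. So it remains to verify the 2-prime implication for $I$.

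Now fix $x,y\in R$ with $x\circ y\subseteq I$, and let $X,Y\in M_n(R)$ be the hypermatrices whose $(1,1)$-entry equals $x$ and $y$ respectively and all of whose other entries are $0$ (the scalar identity $1$ is the standing hypothesis ensuring that $M_n(R)$ is the well-defined unital multiplicative hyperring set up before the theorem, so that such matrices are genuine elements of $M_n(R)$). Using the entrywise product rule $(X\circ Y)_{ij}=\sum_{k=1}^n X_{ik}\circ Y_{kj}$ together with $0\circ r = r\circ 0 = 0$, every entry of $X\circ Y$ vanishes except the $(1,1)$-entry, which is exactly $x\circ y$. Since $x\circ y\subseteq I$ and all other entries are $0\in I$, the componentwise description $A\subseteq B \iff A_{ij}\subseteq B_{ij}$ of containment in $M_n(I)$ yields $X\circ Y\subseteq M_n(I)$.

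Because $M_n(I)$ is 2-prime, I obtain $X^2\subseteq M_n(I)$ or $Y^2\subseteq M_n(I)$. The same entrywise computation shows that $X^2=X\circ X$ has $x\circ x=x^2$ in the $(1,1)$-slot and $0$ elsewhere, and likewise $Y^2$ carries $y^2$ in the $(1,1)$-slot; hence $X^2\subseteq M_n(I)$ forces $x^2\subseteq I$, while $Y^2\subseteq M_n(I)$ forces $y^2\subseteq I$. Either way $x^2\subseteq I$ or $y^2\subseteq I$, so $I$ is 2-prime.

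I expect the only point needing real care to be the bookkeeping of the hypermatrix product: one must confirm that in $X\circ Y$ and in $X^2$ the index sums $\sum_{k} X_{ik}\circ Y_{kj}$ collapse to a single surviving term because $X$ and $Y$ are supported on the single index pair $(1,1)$ (here one uses that adjoining the summand $\{0\}$ leaves a set unchanged), and that the stated definition of $\subseteq$ for hypermatrices correctly translates $X\circ Y\subseteq M_n(I)$ and $X^2\subseteq M_n(I)$ into the scalar containments $x\circ y\subseteq I$ and $x^2\subseteq I$. No deeper structural input about $R$ is required beyond the presence of the scalar identity, which is used only to keep the corner hypermatrices inside $M_n(R)$.
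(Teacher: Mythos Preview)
Your argument is correct and follows essentially the same route as the paper: embed $x$ and $y$ as the $(1,1)$-entries of otherwise-zero matrices, observe their product lands in $M_n(I)$, apply the 2-prime hypothesis, and read off $x^2\subseteq I$ or $y^2\subseteq I$ from the $(1,1)$-entry of the resulting square. Your write-up is in fact a bit more careful than the paper's (you note that $I$ is proper and spell out the entrywise collapse); the only small inaccuracy is the remark that the scalar identity is what puts the corner matrices into $M_n(R)$---those matrices exist regardless, and the identity is rather what makes $M_n(R)$ unital so that ``proper'' and the hyperideal set-up make sense.
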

\begin{proof}
Suppose that for $x,y \in R$ , $x \circ y \subseteq I$. Then 
\[ \begin{pmatrix}
x \circ y & 0 & \cdots & 0\\
0 & 0 & \cdots & 0\\
\vdots& \vdots & \ddots \vdots\\
0 & 0 & \cdots & 0
\end{pmatrix}
\subseteq M_n(I). \]
It is clear that 
\[ \begin{pmatrix}
x \circ y & 0 & \cdots & 0\\
0 & 0 & \cdots & 0\\
\vdots& \vdots & \ddots \vdots\\
0 & 0 & \cdots & 0
\end{pmatrix}
=
\begin{pmatrix}
x & 0 & \cdots & 0\\
0 & 0 & \cdots & 0\\
\vdots& \vdots & \ddots \vdots\\
0 & 0 & \cdots & 0
\end{pmatrix}
\begin{pmatrix}
y & 0 & \cdots & 0\\
0 & 0 & \cdots & 0\\
\vdots& \vdots & \ddots \vdots\\
0 & 0 & \cdots & 0
\end{pmatrix}
.\]
Since $M_n(I)$ is a 2-prime hyperideal of $M_n(R)$ then
\[ \begin{pmatrix}
x & 0 & \cdots & 0\\
0 & 0 & \cdots & 0\\
\vdots& \vdots & \ddots \vdots\\
0 & 0 & \cdots & 0
\end{pmatrix}^2
=\begin{pmatrix}
x^2 & 0 & \cdots & 0\\
0 & 0 & \cdots & 0\\
\vdots& \vdots & \ddots \vdots\\
0 & 0 & \cdots & 0
\end{pmatrix} \subseteq  M_n(I)\]
which means $x^2 \subseteq I$
or
\[ \begin{pmatrix}
y & 0 & \cdots & 0\\
0 & 0 & \cdots & 0\\
\vdots& \vdots & \ddots \vdots\\
0 & 0 & \cdots & 0
\end{pmatrix}^2
=\begin{pmatrix}
y^2 & 0 & \cdots & 0\\
0 & 0 & \cdots & 0\\
\vdots& \vdots & \ddots \vdots\\
0 & 0 & \cdots & 0
\end{pmatrix} \subseteq  M_n(I)\]
which means $y^2 \subseteq I$. Therefore $I$ is a 2-prime hyperideal of $R$.
\end{proof}
Let $(R, +, \circ)$ be a hyperring. We define the relation $\gamma$ as follows:\\ 
$a \gamma b$ if and only if $\{a,b\} \subseteq U$ where $U$ is a finite sum of finite products of
elements of R, i.e.,\\
 $a \gamma b \Longleftrightarrow \exists z_1, ... , z_n \in R$ such that $\{a, b\} \subseteq \sum_{j \in J} \prod_{i \in I_j} z_i; \ \ I_j, J \subseteq \{1,... , n\}$.

We denote the transitive closure of $\gamma$ by $\gamma ^{\ast}$. The relation $\gamma ^{\ast}$ is the smallest equivalence relation on a multiplicative hyperring $(R, +, \circ)$ such that the
quotient $R/\gamma ^{\ast}$, the set of all equivalence classes, is a fundamental ring. Let $\mathfrak{U}$
be the set of all finite sums of products of elements of R we can rewrite the
definition of $\gamma ^{\ast}$ on $R$ as follows:\\
$a\gamma ^{\ast}b$ if and only if there exist  $z_1, ... , z_n \in R$ with $z_1 = a, z_{n+1 }= b$ and $u_1, ... , u_n \in \mathfrak{U}$ such that
$\{z_i, z_{i+1}\} \subseteq u_i$ for $i \in \{1, ... , n\}$.
Suppose that $\gamma ^{\ast}(a)$ is the equivalence class containing $a \in R$. Then, both
the sum $\oplus$ and the product $\odot$ in $R/\gamma ^{\ast}$ are defined as follows:$\gamma ^{\ast}(a) \oplus \gamma ^{\ast}(b)=\gamma ^{\ast}(c)$ for all $c \in \gamma ^{\ast}(a) + \gamma ^{\ast}(b)$ and $\gamma ^{\ast}(a) \odot \gamma ^{\ast}(b)=\gamma ^{\ast}(d)$ for all $d \in \gamma ^{\ast}(a) \circ \gamma ^{\ast}(b)$
Then $R/\gamma ^{\ast}$ is a ring, which is called a fundamental ring of $R$ (see also \cite{f9}).
\begin{theorem}
Let $R$ be a multiplicative hyperring with scalar identity $1$. Then the hyperideal $I$ of $R$ is 2-prime if and only if $I/\gamma ^{\ast}$ is a 2-prime ideal of $R/\gamma ^{\ast}$. 
\end{theorem}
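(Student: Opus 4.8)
The plan is to realize the passage from $R$ to its fundamental ring $R/\gamma^{\ast}$ as a good epimorphism and then to invoke the functorial statements already proved in Theorem \ref{31}, parts (3) and (4). I regard the commutative ring $R/\gamma^{\ast}$ as a multiplicative hyperring in which each product $\gamma^{\ast}(a)\odot\gamma^{\ast}(b)$ is the singleton $\{\gamma^{\ast}(a)\gamma^{\ast}(b)\}$; under this identification the 2-prime hyperideal condition, applied to an ideal of $R/\gamma^{\ast}$, reduces to the classical requirement that $\bar a\,\bar b\in I/\gamma^{\ast}$ imply $\bar a^{2}\in I/\gamma^{\ast}$ or $\bar b^{2}\in I/\gamma^{\ast}$, because all the products in sight are singletons. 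Thus the two notions to be matched are genuinely the hyperideal 2-primeness of $I$ and the ideal 2-primeness of $I/\gamma^{\ast}$.

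First I would verify that the canonical projection $\pi\colon R\to R/\gamma^{\ast}$, $\pi(a)=\gamma^{\ast}(a)$, is a good epimorphism. Surjectivity is clear. For multiplicativity the crucial point is that a single product $a\circ b$ is one of the generators of $\mathfrak{U}$, so all of its elements are $\gamma$-related and hence lie in a common $\gamma^{\ast}$-class; consequently $\gamma^{\ast}(a)\odot\gamma^{\ast}(b)=\gamma^{\ast}(d)$ for every $d\in a\circ b$, which is exactly $\pi(a\circ b)=\pi(a)\odot\pi(b)$ read as a singleton. Additivity follows from $a+b\in\gamma^{\ast}(a)+\gamma^{\ast}(b)$ and the definition of $\oplus$. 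This ``one product, one class'' remark also gives the easy half of the dictionary between $I$ and $I/\gamma^{\ast}$: if $a\circ b\subseteq I$ then any $d\in a\circ b\subseteq I$ gives $\gamma^{\ast}(a)\odot\gamma^{\ast}(b)=\pi(d)\in I/\gamma^{\ast}$, and similarly $a^{2}\subseteq I$ forces $\gamma^{\ast}(a)^{2}\in I/\gamma^{\ast}$.

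Granting these, both implications are formal. For the forward direction, $\pi$ is a good epimorphism with $\ker\pi\subseteq I$ and $\pi(I)=I/\gamma^{\ast}$, so Theorem \ref{31}(4) immediately yields that $I/\gamma^{\ast}$ is 2-prime. For the converse, Theorem \ref{31}(3) applied to $\pi$ shows that $\pi^{-1}(I/\gamma^{\ast})$ is a 2-prime hyperideal of $R$, and since this preimage equals $I$ we are done; properness passes across in the same way because $\pi^{-1}(I/\gamma^{\ast})=I\neq R$.

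The substance of the argument — and the step I expect to be the main obstacle — is the identity $\pi^{-1}(I/\gamma^{\ast})=I$, together with its special case $\ker\pi=\gamma^{\ast}(0)\subseteq I$; equivalently, that the $\mathbf{C}$-hyperideal $I$ is saturated under $\gamma^{\ast}$, i.e.\ is a union of $\gamma^{\ast}$-classes. One inclusion is trivial. For the other I would proceed one $\gamma$-step at a time and reduce to the following: if $x\in I$ and $\{x,d\}\subseteq\sum_{j}P_{j}$ with each $P_{j}\in\mathbf{C}$, then $d\in I$. When every factor $P_{j}$ meets $I$ the $\mathbf{C}$-hyperideal property forces $P_{j}\subseteq I$, whence $d=\sum_{j}q_{j}\in I$; and peeling summands off $x$ one at a time, using that $I$ is an additive subgroup, lets one try to bootstrap toward that situation. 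The delicate point is precisely the interplay of the hyperproduct with the ordinary sum built into $\gamma^{\ast}$: a priori a sum of products might fall into $I$ through additive cancellation while no single product meets $I$, and excluding this — or showing it cannot obstruct membership of $d$ — is where the $\mathbf{C}$-hyperideal hypothesis (and the standing assumption that all hyperideals are $\mathbf{C}$-hyperideals) must be used decisively. Once saturation is in place, the theorem is exactly the translation recorded above.
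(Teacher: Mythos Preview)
Your approach is genuinely different from the paper's. The paper gives a direct element-chase: for the forward direction it lifts $x\odot y\in I/\gamma^{\ast}$ to $a,b\in R$, asserts that $\gamma^{\ast}(a\circ b)\in I/\gamma^{\ast}$ forces $a\circ b\subseteq I$, applies the 2-prime hypothesis, and pushes back down; the converse is the mirror image. You instead package $\pi\colon R\to R/\gamma^{\ast}$ as a good epimorphism and invoke Theorem \ref{31}(3),(4), which is a clean and legitimate shortcut once the bookkeeping is in place.

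However, both routes rest on the \emph{same} unproved step, and you have correctly isolated it: the claim that the $\mathbf{C}$-hyperideal $I$ is a union of $\gamma^{\ast}$-classes, equivalently $\pi^{-1}(I/\gamma^{\ast})=I$. The paper's lines ``$\gamma^{\ast}(a\circ b)\in I/\gamma^{\ast}$, then $a\circ b\subseteq I$'' and ``$\gamma^{\ast}(a)^{2}\in I/\gamma^{\ast}$\ldots\ means that $a^{2}\subseteq I$'' are exactly this saturation claim, asserted without argument. Your worry is well-placed: the $\mathbf{C}$-hyperideal condition controls single hyperproducts, whereas $\gamma$ (and hence $\gamma^{\ast}$) is built from \emph{sums} of hyperproducts, and since $(R,+)$ is an ordinary group one can have $x=\sum_j p_j\in I$ with $p_j\in P_j\in\mathbf{C}$ by additive cancellation even when no $P_j$ meets $I$; nothing then forces a different choice $d=\sum_j q_j$ to lie in $I$. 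So the bootstrap you sketch does not obviously close, and the paper does not close it either. In short: your functorial reduction is sound and tidier than the paper's argument, but the residual obstacle you flag is real and is precisely the point the paper glosses over rather than resolves.
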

\begin{proof}
($\Longrightarrow$) Let for $x, y \in R/\gamma ^{\ast}, \ x \odot y \in I/\gamma ^{\ast}$. Thus, there exist $a, b\in R$ such
that $x =\gamma^{\ast}(a), y = \gamma^{\ast}(b)$ and $x \odot y = \gamma^{\ast}(a) \odot \gamma^{\ast}(b) =\gamma^{\ast}(a \circ b)$. So, $\gamma^{\ast} (a) \odot \gamma^{\ast}(b)=
\gamma^{\ast}(a \circ b) \in I/\gamma^{\ast}$, then $a \circ b \subseteq I$. Since $I$ is 2-prime hyperideal, then $a^2 \subseteq I$ or $b^2 \subseteq I$. Hence $x^2= \gamma^{\ast}(a)^2 \in I/\gamma ^{\ast}$ or $y^2= \gamma^{\ast}(b)^2 \in I/\gamma ^{\ast}$.
Thus $I/\gamma ^{\ast}$ is a 2-prime ideal of $R/\gamma ^{\ast}$.\\
($\Longleftarrow$) Suppose that $a \circ b \subseteq I$ for $a, b \in R$, then $\gamma^{\ast}(a), \gamma^{\ast}(b)\in R/\gamma^{\ast} $ and
$\gamma^{\ast}(a) \odot\gamma^{\ast}(b) = \gamma^{\ast}(a \circ b) \in I/\gamma^{\ast}$. Since $I/\gamma ^{\ast}$ is a 2-prime ideal of $R/\gamma ^{\ast}$,  then we have $\gamma^{\ast}(a)^2 \in I/\gamma^{\ast}$ or $\gamma^{\ast}(b)^2 \in I/\gamma^{\ast}$. It means that $a^2  \subseteq I$ or $b^2 \subseteq I$. Hence $I$ is a 2-prime hyperideal of $R$.
\end{proof}
\begin{lem} \label{1400}
Let $R$ be a local multiplicative hyperring with maximal hyperideal $M$ and $P$ be a prime hyperideal of $R$. Then the hyperideal $P\circ M$ is 2-prime. Moreover, if  $P \circ M$ is a prime hyperideal of $R$ then $P\circ M=P$. 
\end{lem}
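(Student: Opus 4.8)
The plan is to exploit the two containments $P\circ M\subseteq P$ and $P\subseteq M$, the second of which is where locality enters. First I would record the easy inclusion: since $M\subseteq R$ and $P$ is a hyperideal, $P\circ M\subseteq P\circ R\subseteq P$, so in particular $P\circ M$ is proper. Next I would argue that $P\subseteq M$: as $R$ is local with unique maximal hyperideal $M$, every proper hyperideal of $R$ is contained in $M$, and $P$ is a proper (prime) hyperideal.

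For the 2-prime assertion, suppose $x\circ y\subseteq P\circ M$ for some $x,y\in R$. Then $x\circ y\subseteq P$, and primeness of $P$ gives $x\in P$ or $y\in P$. The key observation I would use is that for every $p\in P$ we have both $p\in P$ and $p\in M$ (by $P\subseteq M$), whence $p^2=p\circ p\subseteq P\circ M$. Applying this to whichever of $x,y$ lies in $P$ yields $x^2\subseteq P\circ M$ or $y^2\subseteq P\circ M$, which is exactly the definition of 2-prime.

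For the \emph{moreover} part I would assume $P\circ M$ is prime and show the reverse inclusion $P\subseteq P\circ M$ (the inclusion $P\circ M\subseteq P$ being automatic). Given $p\in P$, the same computation gives $p\circ p\subseteq P\circ M$; now primeness of $P\circ M$ forces $p\in P\circ M$. Hence $P\subseteq P\circ M$, and combining the two inclusions gives $P\circ M=P$.

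The hard part will be justifying the structural fact $P\subseteq M$: one must know that in a local multiplicative hyperring every proper hyperideal lies inside the unique maximal hyperideal. I expect to establish this with the standard Zorn's lemma argument, using that a hyperideal is proper precisely when it omits the scalar identity $e$ (if $e\in I$ then $r\in r\circ e\subseteq I$ for all $r$, so $I=R$) and that the union of a chain of hyperideals omitting $e$ again omits $e$. Once $P\subseteq M$ is available, everything else is a direct reading of the element-wise definitions applied to the products $p\circ p$.
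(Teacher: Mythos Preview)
Your proof is correct and follows essentially the same route as the paper: both arguments use $P\circ M\subseteq P$ together with primeness of $P$ to place one of $x,y$ in $P$, then use $P\subseteq M$ to conclude that the square of that element lies in $P\circ M$; the ``moreover'' part is handled identically. The only cosmetic difference is that the paper justifies $x\in M$ by noting that $x\in P$ is a non-unit (hence lies in the unique maximal hyperideal), whereas you phrase it as the containment $P\subseteq M$ of proper hyperideals and propose a Zorn's-lemma justification---but these are the same standard fact, and the paper simply asserts it without further argument.
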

\begin{proof}
Let $x \circ y \subseteq P \circ M \subseteq P$ for some $x, y \in R$. This means $x \in P$ or $y \in P$. Suppose that $x \in P$. Clearly, $x$ is not unit. Then we have $x \in M$ which implies $x^2 \subseteq P \circ M$. Thus the hyperideal $P\circ M$ is 2-prime.

For the second assertion, assume that the hyperideal $P\circ M$ is prime. Suppose that $x \in P$. Since $P \subseteq M$, then $x^2 \subseteq P \circ M$ which implies $x \in P \circ M$, because $P \circ M$ is a prime hyperideal of $R$. This means $P \subseteq P \circ M$. Since $P \circ M \subseteq P$, then we have $P \circ M=P$.
\end{proof}
\begin{theorem} \label{1401}
Let $R$ be a local multiplicative hyperring with maximal hyperideal $M$. Then every 2-prime hyperideal of $R$ is prime if and only if  for each minimal prime hyperideal $P$ over an arbitrary 2-prime hyperideal $I$, $I \circ M=P$. Furthermore, If every 2-prime hyperideal of $R$ is prime then $M$ is an idempotent hyperideal.  
\end{theorem}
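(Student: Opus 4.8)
The plan is to reduce everything to Lemma~\ref{1400} together with the fact (Theorem~\ref{31}(1)) that the radical of a $2$-prime hyperideal is prime. The first observation I would record is that if $I$ is $2$-prime then $\sqrt{I}$ is the \emph{unique} minimal prime over $I$: every prime containing $I$ contains $\sqrt{I}$, and $\sqrt{I}$ itself is a prime containing $I$, so it is the smallest one. This lets me replace ``a minimal prime $P$ over $I$'' by ``$\sqrt{I}$'' throughout, which is what makes the equivalence go through.

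For the forward implication, suppose every $2$-prime hyperideal is prime and let $P$ be the minimal prime over a $2$-prime hyperideal $I$. By hypothesis $I$ is prime, so $\sqrt{I}=I$ and hence $P=I$. Now apply Lemma~\ref{1400} with $I$ in the role of the prime hyperideal: the hyperideal $I\circ M$ is $2$-prime, hence prime by hypothesis, and the ``moreover'' clause of Lemma~\ref{1400} forces $I\circ M=I=P$, as required.

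For the converse, assume that $I\circ M=P$ holds for the minimal prime $P$ over every $2$-prime hyperideal $I$. Fix a $2$-prime $I$ and take $P=\sqrt{I}$, which is prime by Theorem~\ref{31}(1). The hypothesis gives $\sqrt{I}=I\circ M$. Since $M\subseteq R$ and $I$ is a hyperideal, $I\circ M\subseteq I$, while $I\subseteq\sqrt{I}$ always; chaining these, $\sqrt{I}=I\circ M\subseteq I\subseteq\sqrt{I}$, so all inclusions are equalities and $I=\sqrt{I}=P$ is prime. Thus every $2$-prime hyperideal is prime. The only step needing a word of justification is $I\circ M\subseteq I$, which is immediate from the hyperideal axiom $r\circ x\subseteq I$ for $x\in I$, $r\in R$.

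For the final assertion, assume every $2$-prime hyperideal is prime. The crux is that the maximal hyperideal $M$ is itself prime, and I would obtain this by first checking that $M$ is $2$-prime and then invoking the hypothesis: if $x\circ y\subseteq M$ while $x^2\not\subseteq M$ and $y^2\not\subseteq M$, then $x^2$ and $y^2$ each contain a unit (using that in a local hyperring $M$ is exactly the set of non-units), which forces $x$ and $y$ themselves to be units; but then every element of $x\circ y$ is a unit, contradicting $x\circ y\subseteq M$. With $M$ prime, Lemma~\ref{1400} applied with $P=M$ shows $M\circ M$ is $2$-prime, hence prime by hypothesis, and the ``moreover'' clause yields $M\circ M=M$, i.e.\ $M$ is idempotent. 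I expect this primeness of $M$ to be the main obstacle: the equivalence is essentially formal once Lemma~\ref{1400} and the radical-squeeze are in hand, whereas the idempotency genuinely uses the local-hyperring fact that every element outside $M$ is a unit, which must be available in this setting for the passage from ``$M\circ M$ is $2$-prime'' to Lemma~\ref{1400} with $P=M$ to be licensed.
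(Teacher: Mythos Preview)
Your proof of the equivalence is essentially identical to the paper's: both directions use Lemma~\ref{1400} and the radical squeeze $P=I\circ M\subseteq I\subseteq\sqrt{I}=P$, and your preliminary remark that $\sqrt{I}$ is the \emph{unique} minimal prime over a $2$-prime $I$ makes explicit what the paper uses silently when it writes ``$I\subseteq\sqrt{I}=P$''.

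Where you go beyond the paper is the ``Furthermore'' clause: the paper's proof simply does not address it. Your argument is correct, but you take an unnecessarily indirect path by verifying from scratch that $M$ is $2$-prime. The cleaner route is to note that a maximal hyperideal is prime (standard in this setting), hence trivially $2$-prime; then Lemma~\ref{1400} with $P=M$ gives $M\circ M$ $2$-prime, hence prime by hypothesis, hence $M\circ M=M$ by the ``moreover'' clause. If you do keep your direct verification, the step ``then every element of $x\circ y$ is a unit'' deserves one more line: a priori, a hyperproduct of units need not consist of units, but here if some $z\in x\circ y$ lay in $M$ then the standing ${\bf C}$-hyperideal assumption would force $x\circ y\subseteq M$, and then $y\in e\circ y\subseteq (x'\circ x)\circ y=x'\circ(x\circ y)\subseteq M$ contradicts $y$ being a unit. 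With that clarification (or the shortcut via ``maximal $\Rightarrow$ prime''), your argument is complete.
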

\begin{proof}
$(\Longrightarrow) $ Assume that  every 2-prime hyperideal of $R$ is prime. Since the hyperideal $I$ is 2-prime, then $I=P$ is prime. Then $I \circ M$ is a 2-prime hyperideal of $R$, by Lemma \ref{1400}. This means $I \circ M$ is a prime hyperideal of $R$. Then we conclude that $I\circ M=I$, by Lemma \ref{1400}. 

$(\Longleftarrow)$ Suppose that  $I$ is a 2-prime hyperideal of $R$. Then  we have $I \subseteq \sqrt{I}=P$, by theorem \ref{31}(1). By the assumption, we get $P=I \circ M \subseteq I \cap M=I$. This implies that $I=P$ is a prime hyperideal of $R$.
\end{proof}
\begin{theorem}
Let every 2-prime hyperideal of $R$ be prime and $P$ be an arbitrary prime hyperideal of $R$. Then $P^2=P$.
\end{theorem}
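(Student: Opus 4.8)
The plan is to leverage Theorem \ref{31}(2) to promote $P^2$ from a $2$-prime to a prime hyperideal, and then exploit primeness directly. First I would recall that, since $P$ is a prime hyperideal of $R$, Theorem \ref{31}(2) guarantees that $P^2$ is a $2$-prime hyperideal of $R$. Invoking the standing hypothesis that every $2$-prime hyperideal of $R$ is prime, I conclude that $P^2$ is in fact a prime hyperideal of $R$.

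With $P^2$ prime in hand, I would establish the two inclusions separately. The inclusion $P^2 \subseteq P$ is immediate from the hyperideal axioms: for any $a, b \in P$, applying the absorption condition $r \circ x \subseteq P$ with $r = a \in R$ and $x = b \in P$ gives $a \circ b \subseteq P$, and hence $P^2 = P \circ P \subseteq P$.

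For the reverse inclusion I would argue elementwise. Fix $x \in P$. Then $x^2 = x \circ x \subseteq P \circ P = P^2$. Applying the primeness of $P^2$ to the containment $x \circ x \subseteq P^2$, that is, taking both factors equal to $x$ in the defining condition of a prime hyperideal, forces $x \in P^2$. Therefore $P \subseteq P^2$, and combining this with $P^2 \subseteq P$ yields $P^2 = P$.

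I do not anticipate a serious obstacle here: the entire argument rests on the single observation that $P^2$ is simultaneously $2$-prime (by Theorem \ref{31}(2)) and hence prime (by hypothesis), after which the containment $x \circ x \subseteq P^2$ collapses to $x \in P^2$. The only point demanding a little care is the justification that $x^2 = x \circ x$ lies inside $P^2$ whenever $x \in P$, which is exactly the inclusion $x \circ x \subseteq P \circ P$; everything else is a direct appeal to the definitions of hyperideal and prime hyperideal.
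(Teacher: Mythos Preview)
Your proof is correct and follows essentially the same approach as the paper: invoke Theorem \ref{31}(2) to get that $P^2$ is 2-prime, apply the hypothesis to make it prime, and then conclude $P^2 = P$. The paper merely asserts the final equality as ``easy to see,'' whereas you spell out the two inclusions, and your argument for $P \subseteq P^2$ via $x \circ x \subseteq P^2$ and primeness is exactly the intended justification.
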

\begin{proof}
Assume that  every 2-prime hyperideal of $R$ is prime and $P$ is an arbitrary prime hyperideal of $R$. Then the hyperideal $P^2$ is 2-prime, by Theorem \ref{31}(2). Since very 2-prime hyperideal of $R$ is prime, then the hyperideal $P^2$ is prime. It is easy to see that $P^2$ is equal to $P$.
\end{proof}
\begin{definition}
Let $I$ be a hyperideal of $R$. We say that a 2-prime hyperideal $P$ is minimal over $I$ if there is no a 2-prime hyperideal $P^\prime$ of $R$ with $I \subseteq P^\prime  \subset P$. Note that 2-$Min_R(I)$ denotes the set of minimal 2-prime hyperideals over $I$.
\end{definition}
\begin{theorem} \label{1402}
Let $R$ be a local multiplicative hyperring with maximal hyperideal $M$ and $P$ be a prime hyperideal of $R$. Let for each 2-prime hyperideal $I$ of $R$, $(\sqrt{I})^2 \subseteq I$. Then the followings are equivalent:
\begin{itemize}
\item[\rm{(1)}]~If $P \in Min_R(I)$ for each hyperideal $I \in 2-Min_R(P^2)$, then $I \circ M=P$.
\item[\rm{(2)}]~ If $I \subseteq P$  for each hyperideal $I \in 2-Min_R(P^2)$, then $I=P$.
\end{itemize}
\end{theorem}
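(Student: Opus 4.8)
The plan is to first observe that the index set $2\text{-}Min_R(P^2)$ is a singleton, which collapses both conditions to a statement about the single hyperideal $P^2$. Since $P$ is prime, $P^2$ is a proper $2$-prime hyperideal by Theorem~\ref{31}(2). Every $2$-prime hyperideal $Q$ with $P^2\subseteq Q$ satisfies $P^2\subseteq Q$, and $P^2$ is itself $2$-prime, so no $2$-prime hyperideal can sit strictly between $P^2$ and itself; hence $P^2$ is the unique minimal $2$-prime hyperideal over $P^2$, that is, $2\text{-}Min_R(P^2)=\{P^2\}$. Thus in both (1) and (2) the only hyperideal to consider is $I=P^2$, and the standing hypothesis $(\sqrt{I})^2\subseteq I$ is automatic here, since $\sqrt{P^2}=P$ gives $(\sqrt{P^2})^2=P^2\subseteq P^2$.

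Next I would record two radical facts. As $P$ is prime we have $\sqrt{P}=P$, and since $P^2\subseteq P$ while every $r\in P$ satisfies $r^2\subseteq P\circ P=P^2$, we obtain $\sqrt{P^2}=P$. Consequently the prime hyperideals containing $P^2$ are exactly those containing $P$, so $Min_R(P^2)=\{P\}$; in particular $P\in Min_R(P^2)$ always holds. Likewise $P^2\subseteq P$ always holds. Therefore the premise $P\in Min_R(P^2)$ of (1) and the premise $P^2\subseteq P$ of (2) are both automatically satisfied, so (1) reduces to the assertion $P^2\circ M=P$ and (2) reduces to the assertion $P^2=P$. It then suffices to prove $P^2\circ M=P\Longleftrightarrow P^2=P$.

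For $(1)\Rightarrow(2)$ I would use that $M\subseteq R$ together with the hyperideal property, giving $P^2\circ M\subseteq P^2\circ R\subseteq P^2$; combining this with the hypothesis $P^2\circ M=P$ yields $P\subseteq P^2$, and with $P^2\subseteq P$ this forces $P^2=P$, which is (2) for $I=P^2$. For $(2)\Rightarrow(1)$ I would invoke locality: $P$ is a proper hyperideal of the local hyperring $R$ (each of its elements is a non-unit), hence $P\subseteq M$. From $P^2=P$ and $P\subseteq M$ we get $P=P^2=P\circ P\subseteq P\circ M\subseteq P$, so $P\circ M=P$ and therefore $P^2\circ M=P\circ M=P$, which is (1) for $I=P^2$.

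The only genuinely delicate point is the bookkeeping that pins down $2\text{-}Min_R(P^2)=\{P^2\}$ and $Min_R(P^2)=\{P\}$; once these are in place both premises evaporate and the equivalence rests on the two one-line containments $P^2\circ M\subseteq P^2$ (the hyperideal property) and $P\circ P\subseteq P\circ M$ (from $P\subseteq M$). I expect no obstacle beyond reading the quantifier ``for each $I\in 2\text{-}Min_R(P^2)$'' as ranging over the single element $P^2$, and the hypothesis $(\sqrt{I})^2\subseteq I$ plays no active role for this particular $I$.
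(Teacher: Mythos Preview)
Your proof is correct but takes a genuinely different route from the paper. The paper never observes that $2\text{-}Min_R(P^2)=\{P^2\}$; instead it argues with a generic $I\in 2\text{-}Min_R(P^2)$. For $(1)\Rightarrow(2)$ it first verifies $P\in Min_R(I)$ by a direct squeezing argument (any prime $P'$ with $I\subseteq P'\subseteq P$ must contain $P$ because $P^2\subseteq I\subseteq P'$), applies (1) to get $I\circ M=P$, and then combines this with $I\circ M\subseteq I\subseteq P$ to force $I=P$. For $(2)\Rightarrow(1)$ it uses the standing hypothesis $(\sqrt I)^2\subseteq I$ (together with $\sqrt I=P$, which follows from $P\in Min_R(I)$) to get $P^2\subseteq I\subseteq P$, applies (2) to obtain $I=P$, and then invokes Lemma~\ref{1400} plus the minimality of $I$ among $2$-primes over $P^2$ to conclude $P^2\subseteq P\circ M\subseteq I$, whence $I\circ M=P\circ M=P$. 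Your reduction via $2\text{-}Min_R(P^2)=\{P^2\}$ is shorter and, as you note, shows that the global hypothesis ``$(\sqrt I)^2\subseteq I$ for every $2$-prime $I$'' plays no role here; the paper's argument, by contrast, is the one that actually exercises that hypothesis and Lemma~\ref{1400}, and would remain intact under a reading of ``minimal $2$-prime over $J$'' that excluded $J$ itself.
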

\begin{proof}
(1) $\Longrightarrow$ (2) Suppose that $I \subseteq P$ for some $I \in 2-Min_R(P^2)$. First, we show that the hyperideal $P$ is minimal over $I$. Assume that we have $I \subseteq P^\prime \subseteq P$ for some prime hyperideal $P^\prime $ of $R$. By the assumption, we get $P^2 \subseteq I \subseteq P^\prime \subseteq P$. Let $a \in P$. Then we obtain $a^2 \subseteq P^2$ which implies $a^2 \subseteq P^\prime$. This means $a \in P^\prime$, since $P^\prime$ is a prime hyperideal of $R$. Thus we conclude that   the hyperideal $P$ is minimal over $I$. Now, since $I\circ M \subseteq I \subseteq P$ and $I \circ M =P$, then we get $I=P$. 

(2) $\Longrightarrow$ (1) Let $P \in Min_R(I)$ such that $I \in 2-Min_R(P^2)$. We get $\sqrt{I}=P$ since the hyperideal $\sqrt{I}$ is prime. Clearly, $(\sqrt{I})^2=P^2\subseteq I \subseteq P$. By Lemma \ref{1400}, $P \circ M$ is a 2-prime hyperideal of $R$. Since $P^2 \subseteq P \circ M \subseteq I$ and $I=P$ , then $P \circ M=I \circ M=P$. 
\end{proof}
\begin{corollary}
Let $R$ be a local multiplicative hyperring with maximal hyperideal $M$ and $P$ be a prime hyperideal of $R$. Let for each $P$-2-prime hyperideal  $I$, $I \in 2-Min_R(P^2)$. Then for each hyperideal $I \in 2-Min_R(P^2)$ with $I \subseteq P$, $I=P$ if and only if every 2-prime hyperideal of $R$ is prime.
\end{corollary}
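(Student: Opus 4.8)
The plan is to prove the two implications separately. In both directions the work collapses onto one routine observation, which I would record first: a prime hyperideal absorbs products of hyperideals, i.e. if $A\circ B\subseteq Q$ with $Q$ prime, then $A\subseteq Q$ or $B\subseteq Q$. Indeed, otherwise one could pick $a\in A\setminus Q$ and $b\in B\setminus Q$, and then $a\circ b\subseteq A\circ B\subseteq Q$ would contradict the primeness of $Q$. Recall also that by the definition of $2\text{-}Min_R(\,\cdot\,)$, membership $I\in 2\text{-}Min_R(P^2)$ entails that $I$ is $2$-prime and $P^2\subseteq I$.

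First I would settle the easier backward implication. Assume every $2$-prime hyperideal of $R$ is prime and let $I\in 2\text{-}Min_R(P^2)$ with $I\subseteq P$. Then $I$ is $2$-prime, hence prime by hypothesis, and $P^2=P\circ P\subseteq I$. The product-absorption remark applied to $P\circ P\subseteq I$ forces $P\subseteq I$, and together with $I\subseteq P$ this gives $I=P$. Note that this direction uses neither locality nor the standing hypothesis that $P$-$2$-prime hyperideals lie in $2\text{-}Min_R(P^2)$.

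For the forward implication I would start from an arbitrary $2$-prime hyperideal $J$ and aim to show it is prime. By Theorem \ref{31}(1) the radical $Q:=\sqrt{J}$ is a prime hyperideal, so $J$ is a $Q$-$2$-prime hyperideal; the standing hypothesis (applied to the prime $Q$) then places $J\in 2\text{-}Min_R(Q^2)$. Since $J\subseteq\sqrt{J}=Q$ always holds, the assumed condition ``every member of $2\text{-}Min_R(Q^2)$ contained in $Q$ equals $Q$'' yields $J=Q=\sqrt{J}$, which is prime. Hence $J$ is prime, as desired.

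The main obstacle is one of scope: the stated condition and hypothesis are attached to the fixed prime $P$, whereas the radical $Q=\sqrt{J}$ of an arbitrary $2$-prime $J$ need not equal $P$. I would resolve this by reading the hypotheses as holding for every prime hyperideal, which is the natural reading since the right-hand statement quantifies over \emph{all} $2$-prime hyperideals. Equivalently, one can route the argument through Theorem \ref{1402}: identify the condition here with its condition (2), pass to its equivalent condition (1), and then invoke the characterization of Theorem \ref{1401} to conclude that every $2$-prime hyperideal is prime. Everything remaining is a direct check that the containments $P^2\subseteq I$ and $J\subseteq\sqrt{J}$ hold and that primeness of $\sqrt{J}$ comes from Theorem \ref{31}(1).
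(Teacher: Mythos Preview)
Your argument is correct and, in its final form, lines up with the paper's one-line proof, which simply invokes Theorem~\ref{1401} and Theorem~\ref{1402}. The difference is in how much you unpack. For the backward implication you give a self-contained argument using only the product-absorption property of primes: if every $2$-prime is prime and $I\in 2\text{-}Min_R(P^2)$ with $I\subseteq P$, then $P\circ P\subseteq I$ forces $P\subseteq I$, hence $I=P$. This is cleaner and more transparent than routing that direction through the two theorems, and it makes no use of locality. For the forward implication, your direct attempt (apply the hypothesis to $Q=\sqrt{J}$ for an arbitrary $2$-prime $J$) runs into the scope issue you flag---the corollary is phrased for a fixed $P$, so one cannot literally instantiate it at an arbitrary prime $Q$ without reading the statement as quantified over all primes. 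Your fallback, passing from condition~(2) of Theorem~\ref{1402} to condition~(1) and then to Theorem~\ref{1401}, is precisely the paper's intended proof. Note also that the corollary's hypothesis ``each $P$-$2$-prime hyperideal lies in $2\text{-}Min_R(P^2)$'' implies $(\sqrt{I})^2=P^2\subseteq I$ for such $I$, which is what feeds into the standing assumption of Theorem~\ref{1402}; you implicitly use this when invoking that theorem.
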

\begin{proof}
The claim follows by Theorem \ref{1401} and Theorem \ref{1402}.
\end{proof}

\section{expansion of 2-prime hyperideals}
\begin{definition}
Let  $\delta$ be a hyperideal expansion  of $R$. A proper hyperideal $I$ of $R$ is called  $\delta$-2-primary if for $a, b \in R$, $a \circ b \subseteq I$ implies either $a^2 \subseteq I$ or $b^2 \subseteq \delta(I)$.
\end{definition}
\begin{example}
Suppose that $(\mathbb{Z},+, \cdot)$ is the ring of integers. Assume that $(\mathbb{Z},+, \circ)$ is a multiplicative hyperring with a hyperoperation $a \circ b$ for $a,b \in \mathbb{Z}$. Define the hyperideal expansion $\delta_\star$ by $\delta_\star(I)=3\mathbb{Z}+I$. Since $\delta_\star(2\mathbb{Z})=3\mathbb{Z}+2\mathbb{Z}=\mathbb{Z}$ then we conclude that  $I=2 \mathbb{Z}$ is a $\delta_\star$-2-primary hyperideal of $(\mathbb{Z},+, \circ)$.
\end{example}
Clearly, every prime hyperideal of $R$ is a $\delta$-2-primary hyperideal but its inverse is not true in general.
\begin{example}
Let $\mathbb{Z}$ be the ring of integers, $E$ be the set of all even integers of $\mathbb{Z}$ and $A$  be the set of all positive even integers of $\mathbb{Z}$. In the multiplicative hyperring $\mathbb{Z}_A$ (See Example 3.5 in \cite{das}) , the hyperideal  $E$ is $\delta_1$-2-primary but is not prime.
\end{example}
\begin{example}
In the multiplicative hyperring $(\mathbb{Z},+,\circ)$ with the hyperoperation $a \circ b =\{a.b\}$ for $a,b \in \mathbb{Z}$, the hyperideal $2 \mathbb{Z}$ is $\delta_1$-2-primary.
\end{example}
We start the section with the following trivial result, and hence we omit its proof.
\begin{theorem}
Let $I$ be a proper of $R$. Then
\begin{itemize}
\item[\rm{(1)}]~ $I$ is a $\delta_0$-2-primary hyperideal if and only if $I$ is a 2-prime hyperideal.
\item[\rm{(2)}]~ If $I$ is a primary hyperideal, then $I$ is a $\delta_1$-2-primary hyperideal.
\item[\rm{(3)}]~ If $I$ is a 2-prime hyperideal, then $I$ is a $\delta$-2-primary hyperideal for every hyperideal expansion $\delta$ of $R$.
\item[\rm{(4)}]~ If $I$ is a $\delta$-primary hyperideal, then $I$ is a $\delta$-2-primary hyperideal for every hyperideal expansion $\delta$ of $R$.
\item[\rm{(5)}]~If $I$ is a $\delta$-2-primary hyperideal of $R$ such that $\delta(I) \subseteq \gamma (I)$ for some hyperideal expansion  $\gamma$ of $R$, then $I$ is a $\gamma$-2-primary hyperideal of $R$. 
\end{itemize} 
\end{theorem}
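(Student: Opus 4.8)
The plan is to reduce all five parts to a single elementary observation together with pure definition-chasing, since each statement merely compares the defining condition of $\delta$-2-primary with a neighbouring notion. The key lemma I would isolate first is: \emph{for any hyperideal $J$ of $R$ and any $x \in J$, one has $x^2 = x \circ x \subseteq J$.} This is immediate from the absorption axiom for hyperideals: with $x \in J$ and $x \in R$ we get $x \circ x \subseteq J$. I would record this separately because it is the only nontrivial mechanism used, and because it must be applied to the hyperideals $I$, $\sqrt{I}$, $\delta(I)$ and $\gamma(I)$ in turn; in particular I would note that $\sqrt{I}$ and the expansion value $\delta(I)$ are genuinely hyperideals, which is precisely what licenses the lemma in those cases.

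For parts (1), (3) and (5), no membership argument is needed at all; these follow by comparing the ``$b^2$'' disjunct across the two notions. For (1), since $\delta_0(I) = I$ the condition ``$a^2 \subseteq I$ or $b^2 \subseteq \delta_0(I)$'' is verbatim the condition ``$a^2 \subseteq I$ or $b^2 \subseteq I$'' defining $2$-primeness, so the equivalence is a tautology. For (3), if $I$ is $2$-prime and $a \circ b \subseteq I$, then $a^2 \subseteq I$ or $b^2 \subseteq I \subseteq \delta(I)$, using only the inclusion $I \subseteq \delta(I)$ from the expansion axioms. For (5), if $I$ is $\delta$-2-primary and $a \circ b \subseteq I$, then $a^2 \subseteq I$ or $b^2 \subseteq \delta(I) \subseteq \gamma(I)$, using the hypothesis $\delta(I) \subseteq \gamma(I)$ directly.

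Parts (2) and (4) are where the squaring lemma actually enters. For (4), suppose $I$ is $\delta$-primary and $a \circ b \subseteq I$; by definition $a \in I$ or $b \in \delta(I)$, and applying the lemma to $J = I$ in the first case and to $J = \delta(I)$ in the second yields $a^2 \subseteq I$ or $b^2 \subseteq \delta(I)$, as required. Part (2) is then the special case $\delta = \delta_1$: if $I$ is primary and $a \circ b \subseteq I$, then $a \in I$ or $b \in \sqrt{I}$, and the lemma gives $a^2 \subseteq I$ or $b^2 \subseteq \sqrt{I} = \delta_1(I)$. I expect no genuine obstacle here; the only point demanding any care is confirming that the radical and the expansion value are themselves hyperideals before invoking the lemma, and observing that the right-hand disjunct of (primary)/($\delta$-primary) is a membership statement, not a product containment --- which is exactly the gap the squaring lemma fills.
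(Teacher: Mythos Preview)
Your proposal is correct. The paper itself omits the proof entirely, prefacing the theorem with ``We start the section with the following trivial result, and hence we omit its proof,'' so there is no argument to compare against; your definition-chasing together with the absorption observation $x \in J \Rightarrow x^2 \subseteq J$ is exactly the routine verification the authors had in mind.
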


\begin{theorem}
Let $I$ be a  hyperideal of $R$ and $\delta$ a hypoerideal expansion of $R$. If $I$ is a $\delta$-2-primary hyperideal of $R$, then for some idempotent element $a \in R-I$, $(I:a)$ is a $\delta$-2-primary hyperideal of $R$.
\end{theorem}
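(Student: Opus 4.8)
The plan is to fix an idempotent element $a \in R - I$ and prove that the colon hyperideal $(I:a) = \{ r \in R : r \circ a \subseteq I \}$ is $\delta$-2-primary. First I would record two preliminary facts. Since $a \notin I$ and $a \in a^2$, one cannot have $a \circ a \subseteq I$ (otherwise $a \in a^2 \subseteq I$, a contradiction), so $a \notin (I:a)$ and therefore $(I:a)$ is a \emph{proper} hyperideal; that $(I:a)$ is a hyperideal at all is routine from the hyperideal axioms. Second, $I \subseteq (I:a)$, because $r \in I$ forces $r \circ a \subseteq I$, and hence by the monotonicity property (2) of the expansion $\delta$ we get $\delta(I) \subseteq \delta((I:a))$. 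This inclusion is what will let me upgrade any conclusion of the form ``$\subseteq \delta(I)$'' into the required ``$\subseteq \delta((I:a))$''.

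For the main step, suppose $x, y \in R$ satisfy $x \circ y \subseteq (I:a)$, which by definition means $a \circ x \circ y \subseteq I$. The key idea is to regroup this product as $(a \circ x) \circ y \subseteq I$ and to apply the $\delta$-2-primary property of $I$ with $a \circ x$ playing the role of the first factor and $y$ the second. This yields two cases: either $(a \circ x)^2 \subseteq I$ or $y^2 \subseteq \delta(I)$. In the second case we are immediately done, since $y^2 \subseteq \delta(I) \subseteq \delta((I:a))$. In the first case I would use idempotence once more: because $\{a\} \subseteq a \circ a = a^2$ we have $a \circ x^2 \subseteq a^2 \circ x^2 = (a \circ x) \circ (a \circ x) = (a \circ x)^2 \subseteq I$, and $a \circ x^2 \subseteq I$ is exactly the statement $x^2 \subseteq (I:a)$. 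Either way the pair $x, y$ satisfies the defining implication, so $(I:a)$ is $\delta$-2-primary.

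The main obstacle is the step where the $\delta$-2-primary property of $I$, which is phrased for two \emph{elements} of $R$, is applied to the product \emph{set} $a \circ x$ as though it were a single element. I would handle this in the same spirit as Theorem \ref{31}(5) and the earlier colon computation for $(I:x^2)$: passing to each $s \in a \circ x$, the inclusion $s \circ y \subseteq (a \circ x) \circ y \subseteq I$ gives $s^2 \subseteq I$ or $y^2 \subseteq \delta(I)$, and the standing assumption that every hyperideal is a $\mathbf{C}$-hyperideal is what lets one reassemble the element-wise information on these products into the set inclusion $(a \circ x)^2 \subseteq I$. Controlling the cross terms $s \circ s'$ for $s, s' \in a \circ x$, rather than only the diagonal squares $s^2$, is the delicate point, and the $\mathbf{C}$-hyperideal hypothesis together with the identity $(a \circ x)^2 = a^2 \circ x^2$ is precisely what is needed to close this gap.
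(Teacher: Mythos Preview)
Your approach is essentially the same as the paper's: both group the product as $(a\circ x)\circ y\subseteq I$, invoke the $\delta$-2-primary hypothesis on $I$, and use idempotence via $(I:a)=(I:a^2)$ (equivalently $a\circ x^2\subseteq a^2\circ x^2$) together with $\delta(I)\subseteq\delta((I:a))$. The paper simply runs the contrapositive, assuming $x^2\nsubseteq(I:a)$ and concluding $y^2\subseteq\delta(I)$ in one line, while you treat both cases and are more careful about why $(I:a)$ is proper and about the element-versus-set issue; in particular your observation that $(a\circ x)^2=a\circ a\circ x\circ x\in\mathbf{C}$, so any $s^2\subseteq I$ with $s\in a\circ x$ forces $(a\circ x)^2\subseteq I$ by the $\mathbf{C}$-hyperideal assumption, is exactly the point the paper leaves implicit.
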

\begin{proof}
Let $x \circ y \subseteq (I:a)$ such that $x^2 \nsubseteq (I:a) = (I:a^2)$ for some $x,y \in R$. This means that $x\circ y \circ a \subseteq I$ but $x^2 \circ a^2 \nsubseteq I$. Since $I$ is a $\delta$-2-primary hyperideal of $R$, we get $y^2 \subseteq \delta(I) \subseteq \delta(I:a)$. Thus $(I:a)$ is a $\delta$-2-primary hyperideal of $R$.
\end{proof}
\begin{theorem}
Let $I$ be a hyperideal of $R$ such that for each $a \in R-I$, $(I:a)=(I:a^2)$ and $\delta$ a hyperideal expansion of $R$. If the hyperideal $I$ is irreducible, then $I$ is a $\delta$-2-primary hyperideal of $R$.
\end{theorem}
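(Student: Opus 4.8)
The plan is to prove the stronger statement that $I$ is $2$-prime; since $I\subseteq\delta(I)$ for every hyperideal expansion, $a^{2}\subseteq I$ or $b^{2}\subseteq I$ at once gives $a^{2}\subseteq I$ or $b^{2}\subseteq\delta(I)$, so this suffices. Thus suppose $a\circ b\subseteq I$ for some $a,b\in R$. If $a\in I$ then $a^{2}=a\circ a\subseteq I$, and likewise $b\in I$ forces $b^{2}\subseteq I$; hence I may assume $a,b\in R-I$, which is precisely the situation in which the standing hypotheses $(I:a)=(I:a^{2})$ and $(I:b)=(I:b^{2})$ may be used. Arguing by contradiction, I would then assume $a^{2}\nsubseteq I$ and $b^{2}\nsubseteq I$ and seek to contradict irreducibility.

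The engine of the proof is the classical fact that an irreducible ideal with a stabilising colon chain is primary, with the equality $(I:a)=(I:a^{2})$ playing the role of the stabilisation of $(I:a)\subseteq(I:a^{2})\subseteq\cdots$. Concretely, I would consider the two hyperideals $I+\langle a^{2}\rangle$ and $I+\langle b^{2}\rangle$, both of which strictly contain $I$ under the contradiction hypothesis, and try to show $I=(I+\langle a^{2}\rangle)\cap(I+\langle b^{2}\rangle)$. The inclusion $\supseteq$ is immediate. For $\subseteq$, take $c$ in the intersection. From $c\in I+\langle b^{2}\rangle$ and $a\circ b^{2}=a\circ b\circ b\subseteq I$ one gets $a\circ c\subseteq a\circ I+a\circ\langle b^{2}\rangle\subseteq I$, so $c\in(I:a)=(I:a^{2})$ and therefore $a^{2}\circ c\subseteq I$. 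Since $c\in I+\langle a^{2}\rangle$, writing $c=i+\alpha$ with $i\in I$ and $\alpha\in\langle a^{2}\rangle$, the containment $\langle a^{2}\rangle\circ(I:a^{2})\subseteq I$ yields $\alpha\circ c\subseteq I$, and hence $c\circ c\subseteq i\circ c+\alpha\circ c\subseteq I$. Once the intersection is shown to equal $I$, irreducibility forces $I=I+\langle a^{2}\rangle$ or $I=I+\langle b^{2}\rangle$, i.e. $a^{2}\subseteq I$ or $b^{2}\subseteq I$, the desired contradiction.

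The step I expect to be the main obstacle is the last one: passing from $c\circ c\subseteq I$ (equivalently, from $a^{2}\circ c\subseteq I$ together with $c\in I+\langle a^{2}\rangle$) to the conclusion $c\in I$ that the irreducibility argument actually requires. In an ordinary ring one writes $c=i+r a^{2}$, multiplies by the single element produced from the colon equality, cancels the summand lying in $I$, and reads off that the remaining term is in $I$; in a multiplicative hyperring the merely sub-distributive law $a^{2}\circ(i+\alpha)\subseteq a^{2}\circ i+a^{2}\circ\alpha$ runs in the wrong direction and blocks this cancellation, which is exactly why the natural output is the squared membership $c\circ c\subseteq I$ rather than $c\in I$, and hence why the theorem is phrased for the $2$-prime (squared) notion. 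To bridge this gap I would exploit that $(R,+)$ is an \emph{ordinary} abelian group, so that the representation $c=i+\alpha$ is single valued, together with the standing assumption that every hyperideal is a $\mathbf{C}$-hyperideal, which converts ``some product meets $I$'' into ``the whole product lies in $I$'' and lets one promote $\alpha\circ c\subseteq I$ to $\alpha\in I$, whence $c\in I$. Assembling these observations carefully is the only delicate point; everything else is the formal bookkeeping sketched above.
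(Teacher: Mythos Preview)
Your overall strategy---contradict irreducibility by exhibiting $I$ as an intersection of two strictly larger hyperideals---is the same as the paper's, but the execution has a genuine gap at exactly the point you flag. You work with $I+\langle a^{2}\rangle$ and $I+\langle b^{2}\rangle$ and correctly reach $c^{2}\subseteq I$ (equivalently $\alpha\circ c\subseteq I$), and then propose to close the argument by invoking the $\mathbf{C}$-hyperideal property to pass from $\alpha\circ c\subseteq I$ to $\alpha\in I$. That inference is invalid: the $\mathbf{C}$-hyperideal condition only says that if a finite hyperproduct meets $I$ then the whole hyperproduct lies in $I$; it never allows you to conclude that a \emph{factor} of a product contained in $I$ lies in $I$ (that would be primeness, which is what you are trying to prove). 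So the final step does not go through, and the argument stalls at $c^{2}\subseteq I$ with no way to obtain $c\in I$.

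The paper avoids this trap by a small but decisive change: it intersects $I+\langle x\rangle$ and $I+\langle y\rangle$ rather than the squares. Writing $t=a_{1}+t_{1}=a_{2}+t_{2}$ with $t_{1}\in r_{1}\circ x$ and $t_{2}\in r_{2}\circ y$, one multiplies the first representation by $y$ to get $y\circ t\subseteq I$ (using $x\circ y\subseteq I$), then uses the $\mathbf{C}$-hyperideal property on the \emph{second} representation to force $y\circ t_{2}\subseteq I$, i.e.\ $r_{2}\circ y^{2}\subseteq I$. Now the hypothesis $(I:y)=(I:y^{2})$ is applied in the direction you did not use: from $r_{2}\in(I:y^{2})$ one gets $r_{2}\in(I:y)$, hence $r_{2}\circ y\subseteq I$, so $t_{2}\in I$ and therefore $t\in I$ directly---no squared membership to undo. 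In short, using the linear generators lets the colon equality drop the degree from $2$ back to $1$ at the level of the coefficient $r_{2}$, whereas your choice of squared generators never produces such a coefficient to which the hypothesis can be applied.
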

\begin{proof}
We suppose that $I$ is not a $\delta$-2-primary hyperideal of $R$ and look for a contradiction. This means that there exist $x,y \in R$ such that $x \circ y \subseteq I$ but $x^2 \nsubseteq I$ and $y^2 \nsubseteq \delta(I)$. Thus we get $x \notin I$ and $y \notin \delta(I)$. Let $t \in (I+<x>) \cap  (I+<y>)$. Then there are $a_1,a_2 \in I$ and $r_1,r_2 \in R$ such that for some $t_1 \in r_1 \circ x$ and $t_2 \in r_2 \circ y$ we have $t=a_1+t_1=a_2+t_2$. So we get $y \circ (a_2+t_2) \subseteq y \circ a_2 + y \circ t_2$. Also, we have $y \circ (a_1+t_1) \subseteq y \circ a_1 +y \circ t_1$. Since $y \circ a_1 +y \circ t_1 \subseteq I$ then $y \circ (a_2+t_2) \subseteq I$. This implies that $ (y \circ a_2 +y \circ t_2) \cap I \neq \varnothing$. Since $I$ is a $\mathbf{C}$-hyperideal of $R$, then $y \circ a_2 +y \circ t_2 \subseteq I$. Hence $y \circ t_2 \subseteq I$ which implies $r_2 \circ y^2 \subseteq I$. Therefore $r_2 \in (I:y^2)$ which means $r_2 \in (I:y)$, by the assumption. So $r_2 \circ y \subseteq I$. Thus we have $t=a_2+t_2 \subseteq a_2+r_2 \circ y \subseteq I$. Then $(I+<x>) \cap  (I+<y>) \subseteq I$. Since $I \subseteq (I+<x>) \cap  (I+<y>)$ then we obtain $I=(I+<x>) \cap  (I+<y>)$. This is a contradiction since $I$ is irreducible. Consequently, the hyperideal $I$ of $R$ is $\delta$-2-primary.
\end{proof}
\begin{theorem}
 Let $\delta$ be a hyperideal expansion of $R$. Let  $I$ and $\delta(I)$ be  semiprime hyperideals of $R$. Then $I$ is a $\delta$-2-primary hyperideal of $R$ if and only if $I$ is $\delta$-primary. 
\end{theorem}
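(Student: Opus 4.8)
The plan is to split the equivalence into its two implications and to invoke the semiprime hypotheses only where they are actually needed. This statement is the $\delta$-expansion analogue of the earlier result that a semiprime hyperideal is $2$-prime if and only if it is prime, so the proof should mirror that one closely.

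For the direction ``$\delta$-primary $\Rightarrow$ $\delta$-2-primary'' I would argue directly, observing that neither semiprime assumption is required here: if $a\circ b\subseteq I$, then $\delta$-primaryness gives $a\in I$ or $b\in\delta(I)$, and absorption turns these into $a^2=a\circ a\subseteq I$ or $b^2=b\circ b\subseteq\delta(I)$, which is exactly the $\delta$-2-primary condition. (This is also recorded as part~(4) of the classification theorem for $\delta$-2-primary hyperideals.)

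The substantive direction is ``$\delta$-2-primary $\Rightarrow$ $\delta$-primary''. First I would reduce it to a single lemma: \emph{if $J$ is a semiprime hyperideal and $z^2\subseteq J$, then $z\in J$.} Granting this, suppose $a\circ b\subseteq I$. Since $I$ is $\delta$-2-primary we obtain $a^2\subseteq I$ or $b^2\subseteq\delta(I)$; applying the lemma to the semiprime hyperideal $I$ in the first case yields $a\in I$, and to the semiprime hyperideal $\delta(I)$ in the second case yields $b\in\delta(I)$. Either way the $\delta$-primary condition ($a\in I$ or $b\in\delta(I)$) holds, so $I$ is $\delta$-primary and the equivalence is complete.

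The hard part will be the lemma, and this is where I would spend the effort. The semiprime definition as stated (``$x^{k}\circ y\subseteq J\Rightarrow x\circ y\subseteq J$'') does not by itself let one descend from $z^2\subseteq J$ to $z\in J$, so I would instead pass to the principal hyperideal $\langle z\rangle$ and apply Proposition~2.4 in \cite{far} (the Note recalled earlier in the paper), which says that for semiprime $J$ the inclusion $\langle z\rangle^{n}\subseteq J$ forces $\langle z\rangle\subseteq J$. It therefore suffices to verify $\langle z\rangle^{2}\subseteq J$. I expect this verification to be the main obstacle: a typical element of $\langle z\rangle$ is a finite sum of terms $r\circ z$ and $nz$, so by distributivity any product of two elements of $\langle z\rangle$ is contained in a finite sum of terms each absorbing a factor $z\circ z=z^{2}\subseteq J$; since $J$ absorbs products and is closed under addition and subtraction, every such term, and hence the whole sum, lies in $J$. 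This yields $\langle z\rangle^{2}\subseteq J$, and then $\langle z\rangle\subseteq J$, i.e. $z\in J$, which closes the argument.
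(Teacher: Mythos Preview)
The paper does not supply a proof of this theorem at all; it is stated and left to the reader, exactly like the parallel Section~3 result that a semiprime $2$-prime hyperideal is prime. Your argument is correct and is precisely what the paper expects the reader to fill in: the backward implication is the general fact recorded as item~(4) of the classification theorem for $\delta$-$2$-primary hyperideals (no semiprime hypothesis needed), and the forward implication reduces to the observation that $z^{2}\subseteq J$ forces $z\in J$ whenever $J$ is semiprime, which you extract from $\langle z\rangle^{2}\subseteq J$ via Proposition~2.4 of \cite{far}---the very note the paper recalls immediately before stating the analogous Section~3 result. So your approach coincides with the intended (but unwritten) one.
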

\begin{theorem}
Let $I$ be a hyperideal of $R$ and $\delta$ a hyperideal expansion of $R$ such that $\sqrt{\delta(I)} \subseteq \delta(\sqrt{I})$. If $I$ is a $\delta$-2-primary hyperideal of $R$,  then $\sqrt{I}$ is a $\delta$-primary hyperideal of $R$.
\end{theorem}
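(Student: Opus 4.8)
The plan is to verify the defining condition of a $\delta$-primary hyperideal for $\sqrt{I}$ directly, closely modeled on the argument already used for Theorem~\ref{31}(1). So I would begin by fixing $x,y \in R$ with $x \circ y \subseteq \sqrt{I}$ and aim to conclude that $x \in \sqrt{I}$ or $y \in \delta(\sqrt{I})$. Since every hyperideal is assumed to be a $\mathbf{C}$-hyperideal, the radical admits the description $\sqrt{I}=\{\,r \in R : r^m \subseteq I \text{ for some } m\,\}$. Exactly as in the proof of Theorem~\ref{31}(1), the containment $x \circ y \subseteq \sqrt{I}$ then yields a positive integer $n$ with $x^n \circ y^n \subseteq I$, which reduces the problem to a statement about a product sitting inside $I$ itself.

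The second step is to invoke the $\delta$-2-primary hypothesis on $I$. Reading $x^n \circ y^n \subseteq I$ as the product of $x^n$ and $y^n$ and applying the definition of a $\delta$-2-primary hyperideal to this pair (the same device the paper uses in the proof of Theorem~\ref{31}(1) and in the $(I:x^2)$-type results), I obtain $(x^n)^2 = x^{2n} \subseteq I$ or $(y^n)^2 = y^{2n} \subseteq \delta(I)$. Now I translate each alternative. If $x^{2n} \subseteq I$, then the radical characterization immediately gives $x \in \sqrt{I}$. If instead $y^{2n} \subseteq \delta(I)$, then $y \in \sqrt{\delta(I)}$, and the standing hypothesis $\sqrt{\delta(I)} \subseteq \delta(\sqrt{I})$ upgrades this to $y \in \delta(\sqrt{I})$. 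In either case the $\delta$-primary condition for $\sqrt{I}$ is satisfied, which finishes the argument.

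The delicate point, and the step I expect to carry the real weight, is the bookkeeping in the passage $x \circ y \subseteq \sqrt{I} \Rightarrow x^n \circ y^n \subseteq I$ together with the application of the element-level definition of $\delta$-2-primary to the \emph{subsets} $x^n$ and $y^n$. Both moves lean on the $\mathbf{C}$-hyperideal assumption and on interpreting $x^n \circ y^n \subseteq I$ as a product of two ``generalized elements''; if one wishes to be fully rigorous one can instead prove a set-form of the $\delta$-2-primary property (in the spirit of Theorem~\ref{31}(5)) and feed $x^n, y^n$ into it. One must also keep the built-in asymmetry of the definition straight: the first alternative must land in $I$, so that it produces $x \in \sqrt{I}$, while the second must land in $\delta(I)$, so that the hypothesis $\sqrt{\delta(I)} \subseteq \delta(\sqrt{I})$ can convert it into $y \in \delta(\sqrt{I})$. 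Once this alignment is fixed, the remaining work is a routine translation through the radical.
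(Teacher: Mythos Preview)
Your proposal is correct and follows essentially the same route as the paper's proof: pass from $x\circ y\subseteq\sqrt{I}$ to $x^n\circ y^n\subseteq I$, apply the $\delta$-2-primary condition to get $x^{2n}\subseteq I$ or $y^{2n}\subseteq\delta(I)$, and then use the radical and the hypothesis $\sqrt{\delta(I)}\subseteq\delta(\sqrt{I})$. The only cosmetic difference is that the paper phrases it contrapositively (assuming $x\notin\sqrt{I}$ to force the second alternative), and it glosses over exactly the set-versus-element issue you flag in your final paragraph.
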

\begin{proof}
Let $x \circ y \subseteq \sqrt{I}$ for some $x,y \in R$ such that $x \notin \sqrt{I}$. This means that we have $x^n\circ y^n \subseteq I$ for some $n \in \mathbb{N}$. Clearly, $x^{2n} \nsubseteq I$. Since $I$ is a $\delta$-2-primary hyperideal of $R$, then $y^{2n} \subseteq  \delta(I)$ which implies $y \in \sqrt{\delta(I)} \subseteq \delta(\sqrt{I})$. This means that $\sqrt{I}$ is a $\delta$-primary hyperideal of $R$.
\end{proof}
\begin{theorem}
Let $I$ be a hyperideal of $R$ and $\delta$ a hyperideal expansion of $R$ such that $\sqrt{\delta(I)} = \delta(I)$. If the hyperideal $I$ is primary, then $I$ is a $\delta$-2-primary hyperideal of $R$.
\end{theorem}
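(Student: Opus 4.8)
The plan is to unwind the definitions directly and split on the dichotomy supplied by primariness. Suppose $a \circ b \subseteq I$ for some $a, b \in R$; I must produce either $a^2 \subseteq I$ or $b^2 \subseteq \delta(I)$. Since $I$ is primary, $a \circ b \subseteq I$ forces $a \in I$ or $b \in \sqrt{I}$, and I will treat these two cases separately.

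In the first case, $a \in I$. Because $I$ is a hyperideal and $a \in R$, the absorption property $r \circ x \subseteq I$ for $x \in I$ and $r \in R$ gives $a^2 = a \circ a \subseteq I$ upon taking $r = x = a$. This already yields the left alternative of the $\delta$-2-primary condition, so this case is immediate.

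In the second case, $b \in \sqrt{I}$, and here is where the hypothesis $\sqrt{\delta(I)} = \delta(I)$ enters. Since $I \subseteq \delta(I)$ by property (1) of a hyperideal expansion, every prime hyperideal containing $\delta(I)$ also contains $I$, whence $\sqrt{I} \subseteq \sqrt{\delta(I)}$ by monotonicity of the radical; combined with the hypothesis this gives the chain $\sqrt{I} \subseteq \sqrt{\delta(I)} = \delta(I)$. Thus $b \in \delta(I)$, and applying the absorption property of the hyperideal $\delta(I)$ exactly as above yields $b^2 = b \circ b \subseteq \delta(I)$. This is the right alternative, completing the case analysis and the proof.

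There is no genuine obstacle: the only point requiring care is the chain $\sqrt{I} \subseteq \sqrt{\delta(I)} = \delta(I)$, which is precisely the role played by the assumption that $\delta(I)$ is its own radical. Without it, the membership $b \in \sqrt{I}$ could not be upgraded to $b \in \delta(I)$, and the argument would break down. Everything else reduces to the hyperideal absorption property that converts an element membership $a \in J$ into the set inclusion $a^2 \subseteq J$.
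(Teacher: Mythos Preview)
Your proof is correct and follows essentially the same route as the paper: from primariness you obtain $a\in I$ or $b\in\sqrt{I}$, then use $\sqrt{I}\subseteq\sqrt{\delta(I)}=\delta(I)$ together with hyperideal absorption to conclude $a^2\subseteq I$ or $b^2\subseteq\delta(I)$. The paper's version is terser (it leaves the inclusion $\sqrt{I}\subseteq\sqrt{\delta(I)}$ and the absorption step implicit), but the argument is identical.
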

\begin{proof}
Let $x \circ y \subseteq I$ for some $x,y \in R$. We have $x \in I$ or $y \in \sqrt{I} \subseteq \sqrt{\delta(I)}$. Thus $x^2 \subseteq I$ or $y^2 \subseteq \sqrt{\delta(I)}=\delta(I)$. Consequently, $I$ is a $\delta$-2-primary hyperideal of $R$.
\end{proof}
Recall from \cite{ul} that a hyperideal expansion $\delta$ of $R$ has the property of intersection preserving, if it satisﬁes  $\delta(I \cap J)=\delta(I) \cap \delta(J)$ for any hyperideals $I$ and $J$ of $R$.
\begin{theorem}
Let $\delta$ has the property of intersection preserving. If $I_i$ is a $\delta$-2-primary hyperideal of $R$ with $\delta(I_i)=P$ for all $1 \leq i \leq n$. Then $I=\bigcap_{i=1}^nI_i$ is a $\delta$-2-primary hyperideal of $R$. 
\end{theorem}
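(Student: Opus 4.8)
The plan is to first pin down the expansion $\delta(I)$ explicitly and then exploit the fact that all the $\delta(I_i)$ share the common value $P$ in order to combine the individual $\delta$-2-primary conditions. First I would observe that $I=\bigcap_{i=1}^n I_i$ is again a proper hyperideal of $R$, being an intersection of hyperideals and contained in each (proper) $I_i$. Applying the intersection preserving hypothesis (by an easy induction on $n$) gives
\[
\delta(I)=\delta\Big(\bigcap_{i=1}^n I_i\Big)=\bigcap_{i=1}^n \delta(I_i)=\bigcap_{i=1}^n P=P,
\]
so the target expansion of $I$ is exactly the uniform value $P$.

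Next I would verify the defining implication directly. Suppose $a\circ b\subseteq I$ for some $a,b\in R$. Since $I\subseteq I_i$ for every $i$, we have $a\circ b\subseteq I_i$, and because each $I_i$ is $\delta$-2-primary we obtain, for each index $i$, the dichotomy $a^2\subseteq I_i$ or $b^2\subseteq \delta(I_i)=P$.

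The key step is to combine these local dichotomies into a single global one. Assume $b^2\nsubseteq P=\delta(I)$. Then for no index $i$ can the second alternative $b^2\subseteq \delta(I_i)=P$ hold, so for every $i$ the first alternative must hold, namely $a^2\subseteq I_i$. Intersecting over all $i$ yields $a^2\subseteq \bigcap_{i=1}^n I_i=I$. Hence either $b^2\subseteq \delta(I)$ or $a^2\subseteq I$, which is precisely the $\delta$-2-primary condition for $I$.

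The argument is essentially mechanical once $\delta(I)=P$ is established, so I do not anticipate a genuine obstacle. The only two places the hypotheses are really used are worth flagging: the intersection preserving property is exactly what pins down $\delta(I)=P$, and the assumption that all the $\delta(I_i)$ equal the \emph{same} $P$ is what lets the second alternative be ruled out uniformly across every $i$. Without this common value the intersection argument for $a^2$ would break down, since $b^2$ could fail to lie in $\delta(I_i)$ for some indices while lying in it for others, so I would be careful to highlight that the shared value $P$ is indispensable.
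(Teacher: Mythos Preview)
Your proof is correct and follows essentially the same approach as the paper: both use the intersection-preserving hypothesis to identify $\delta(I)=\bigcap_i \delta(I_i)=P$ and then exploit the common value $P$ to pass from the local $\delta$-2-primary dichotomies to the global one. The only cosmetic difference is that the paper assumes $a^2\nsubseteq I$, picks a single index $t$ with $a^2\nsubseteq I_t$, and deduces $b^2\subseteq\delta(I_t)=P=\delta(I)$, whereas you argue the contrapositive by assuming $b^2\nsubseteq P$ and forcing $a^2\subseteq I_i$ for every $i$.
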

\begin{proof}
Let $a , b \in I$ with $a \circ b \subseteq I$ and $a^2 \nsubseteq I$. Then there exists some $1 \leq t \leq n$ such that $a^2 \nsubseteq I_t$. Since $I_t$ is a $\delta$-2-primary hyperideal of $R$ then $b^2 \subseteq \delta(I_t)=P$. Therefore $b^2 \subseteq P=\bigcap_{i=1}^n\delta(I_i)=\delta(\bigcap_{i=1}^nI_i)$. Thus we conclude that $I=\bigcap_{i=1}^nI_i$ is a $\delta$-2-primary hyperideal of $R$.
\end{proof}
\begin{theorem}
If  $\{I_j \ \vert \ j \in \Lambda\}$ is a directed set of $\delta$-2-primary hyperideals of $R$, then $\bigcup_{j \in \Lambda}I_j$ is a $\delta$-2-primary hyperideal of $R$.
\end{theorem}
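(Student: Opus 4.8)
The plan is to first confirm that $I := \bigcup_{j \in \Lambda} I_j$ is again a proper hyperideal of $R$, and then verify the defining implication of a $\delta$-2-primary hyperideal directly. For the hyperideal structure I would use the directedness of the family: given $a, b \in I$, there are indices $j, k$ with $a \in I_j$ and $b \in I_k$, and by directedness some $I_l$ with $I_j \cup I_k \subseteq I_l$, so that $a - b \in I_l \subseteq I$; closure under the absorption condition $r \circ x \subseteq I$ is immediate since $x \in I_j$ for some $j$ forces $r \circ x \subseteq I_j \subseteq I$. Properness follows because no $I_j$, being proper, contains a unit, so neither does their union.

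For the main implication, I would take $a, b \in R$ with $a \circ b \subseteq I$ and aim to show $a^2 \subseteq I$ or $b^2 \subseteq \delta(I)$. The crucial step is to locate a \emph{single} index $l \in \Lambda$ with $a \circ b \subseteq I_l$. Since every element of $a \circ b$ lies in some member of the family and the family is directed, any finite subcollection of indices is dominated by a single $I_l$; applying this to the (finite) set $a \circ b$ yields $a \circ b \subseteq I_l$. Because $I_l$ is $\delta$-2-primary, we then get $a^2 \subseteq I_l \subseteq I$ or $b^2 \subseteq \delta(I_l)$. In the latter case, monotonicity of the expansion (property (2) in the definition of a hyperideal expansion), applied to $I_l \subseteq I$, gives $\delta(I_l) \subseteq \delta(I)$, hence $b^2 \subseteq \delta(I)$. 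This establishes the required dichotomy and shows $I$ is $\delta$-2-primary.

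Alternatively, I would phrase this contrapositively: if $a^2 \nsubseteq I$ and $b^2 \nsubseteq \delta(I)$, then for every $j$ we have $a^2 \nsubseteq I_j$ (as $I_j \subseteq I$) and $b^2 \nsubseteq \delta(I_j)$ (as $\delta(I_j) \subseteq \delta(I)$), so the $\delta$-2-primary property of each $I_j$ forces $a \circ b \nsubseteq I_j$ for all $j$; directedness then contradicts $a \circ b \subseteq \bigcup_j I_j$. The hard part is precisely this covering step: passing from ``each point of $a \circ b$ lies in some $I_j$'' to ``$a \circ b$ lies in one $I_l$.'' This is where the directedness of the family is essential, and it relies on $a \circ b$ being a finite set (as in the motivating hyperrings $\mathbb{Z}_A$ with finite $A$); I would make this finiteness explicit in the write-up, since without it a directed union need not absorb an infinite subset into a single member.
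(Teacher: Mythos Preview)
Your argument follows the same line as the paper's: isolate a single index $l$ with $x\circ y\subseteq I_l$, apply the $\delta$-2-primary condition there, and push $b^2\subseteq\delta(I_l)$ up to $\delta(I)$ via monotonicity of $\delta$. The paper's version is terser---it assumes $x^2\nsubseteq\bigcup_j I_j$, notes this gives $x^2\nsubseteq I_i$ for some (in fact every) $i$, and then asserts $y^2\subseteq\delta(I_i)$, leaving the passage ``$x\circ y\subseteq I_i$'' implicit.

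The one point worth correcting is your handling of that passage. You flag finiteness of $a\circ b$ as the mechanism for collapsing the cover into a single $I_l$, and you are right that raw directedness does not absorb an infinite subset. But in this paper the finiteness caveat is unnecessary: the standing convention is that every hyperideal is a \textbf{C}-hyperideal, and $a\circ b\in\mathbf{C}$. Pick any $t\in a\circ b$; since $a\circ b\subseteq\bigcup_j I_j$ there is some $l$ with $t\in I_l$, so $(a\circ b)\cap I_l\neq\varnothing$, and the \textbf{C}-hyperideal property of $I_l$ forces $a\circ b\subseteq I_l$ outright. No finiteness of the hyperproduct and, for this step, not even directedness is needed (directedness is still used, as you say, to make the union a hyperideal). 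So you can replace your finite-cover argument with a one-line appeal to the \textbf{C}-hyperideal hypothesis and drop the caveat entirely.
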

\begin{proof}
Let $x \circ y \subseteq \bigcup_{j \in \Lambda}I_j$ for some $x,y \in R$. Assume that $x^2 \nsubseteq \bigcup_{j \in \Lambda}I_j$. This implies that there exists $i \in \Lambda$ such that $x^2 \nsubseteq I_i$. Since $I_i$ is a $\delta$-2-primary hyperideal of $R$, then $y ^2 \subseteq \delta(I_i) \subseteq \delta(\bigcup_{j \in \Lambda}I_j)$. Thus $\bigcup_{j \in \Lambda}I_j$ is a $\delta$-2-primary hyperideal of $R$.
\end{proof}
\begin{theorem}
Let $R$ be a regular multiplicative hyperring and $\delta$   be  hyperideal expansion of $R$. If $I$ is a $\delta$-2-primary hyperideal of $R$ then $I$ is $\delta$-primary. 
\end{theorem}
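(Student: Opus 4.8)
The plan is to mimic the argument already used for the regular-ring case of 2-prime hyperideals (the theorem showing that a 2-prime hyperideal of a regular multiplicative hyperring is prime), since $\delta$-primary stands to $\delta$-2-primary exactly as prime stands to 2-prime. The whole point is that regularity lets us upgrade a containment $x^2 \subseteq J$ to membership $x \in J$ for \emph{any} hyperideal $J$: if $x \in x^2 \circ r$ for some $r \in R$ and $x^2 \subseteq J$, then $x^2 \circ r \subseteq J \circ r \subseteq J$ by the absorption property of hyperideals, whence $x \in J$. I will extract this as the single recurring observation and apply it twice.

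So first I would take arbitrary $a, b \in R$ with $a \circ b \subseteq I$ and seek to show $a \in I$ or $b \in \delta(I)$, which is precisely the $\delta$-primary condition. Since $I$ is $\delta$-2-primary, the hypothesis $a \circ b \subseteq I$ immediately yields the dichotomy $a^2 \subseteq I$ or $b^2 \subseteq \delta(I)$.

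Next I would treat the two cases with the observation above. In the first case, using that $R$ is regular, choose $r \in R$ with $a \in a^2 \circ r$; from $a^2 \subseteq I$ and the fact that $I$ is a hyperideal we get $a^2 \circ r \subseteq I$, so $a \in I$. In the second case, choose $s \in R$ with $b \in b^2 \circ s$; here I use that $\delta(I)$ is itself a hyperideal of $R$ (this is built into the definition of a hyperideal expansion), so $b^2 \subseteq \delta(I)$ gives $b^2 \circ s \subseteq \delta(I)$ and hence $b \in \delta(I)$. Either way the required conclusion holds, so $I$ is $\delta$-primary.

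There is essentially no serious obstacle here; the only thing one must be careful about is not to overlook that $\delta(I)$ must be a hyperideal in order to apply the absorption step in the second case, and this is guaranteed by property of $\delta$ as a hyperideal expansion rather than by any special feature of $I$. The regularity of $R$ is used in both cases only through the existence of the multipliers $r$ and $s$, and no further structure on $\delta$ (such as intersection preservation) is needed.
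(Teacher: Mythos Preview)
Your proof is correct and follows essentially the same approach as the paper: both exploit regularity to upgrade $x^2 \subseteq J$ to $x \in J$ via $x \in x^2 \circ r \subseteq J$, applied once with $J = I$ and once with $J = \delta(I)$. The only cosmetic difference is that the paper organizes the argument contrapositively (assume $a \notin I$, deduce $a^2 \nsubseteq I$, hence $b^2 \subseteq \delta(I)$, hence $b \in \delta(I)$), whereas you do a direct case split on the dichotomy $a^2 \subseteq I$ or $b^2 \subseteq \delta(I)$; your explicit remark that $\delta(I)$ is a hyperideal is a point the paper leaves implicit.
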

\begin{proof}
Let $I$ is a $\delta$-2-primary hyperideal of $R$. Assume that  $a,b \in R$ and $a \circ b \subseteq I$ such that $a \notin I$. Since $R$ is regular, then there exists $r \in R$ such that $a \in a^2 \circ r$. Let $a^2 \subseteq I$. Then $a \in a^2 \circ r \subseteq I$ which is a contradiction. So $a^2 \nsubseteq I$. Since $I$ is a $\delta$-2-primary hyperideal of $R$, then $b ^2 \subseteq \delta(I)$. Since $R$ is regular, then there exists $r^\prime \in R$ such that $b \in b^2 \circ r^\prime \subseteq \delta(I)$. Therefore,  the hyperideal $I$ is $\delta$-primary.
\end{proof}
\begin{theorem} \label{homo}
Let $\delta$ and $\gamma$ be  hyperideal expansions of $R_1$ and $R_2$, respectively and  $f:R_1 \longrightarrow R_2$  a $\delta \gamma$-homomorphism. Then:
\begin{itemize}
\item[\rm{(1)}]~ If $I_2$ is a $\gamma$-2-primary hyperideal of $R_2$, then $f^{-1}(I_2)$ is a $\delta$-2-primary hyperideal of $R_1$.
\item[\rm{(2)}]~ Let $I_1$ be a hyperideal of $R_1$ and $f$  an epimorphism with $Ker f \subseteq I_1$.  Then $f(I_1)$ is a $\gamma$-2-primary hyperideal of $R_2$ if and only if  $I_1$ is a $\delta$-2-primary hyperideal of $R_1$.
\end{itemize}
\end{theorem}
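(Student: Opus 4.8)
The plan is to adapt the pre-image/image arguments already carried out for the 2-prime case in Theorem \ref{31}(3) and (4), and to feed in the two defining identities of a $\delta\gamma$-homomorphism at exactly the steps where the expansions $\delta$ and $\gamma$ enter. The workhorse throughout is that $f$ is good, so $f(x^2)=f(x\circ x)=f(x)\circ f(x)=f(x)^2$ for every $x$; this lets me transport squares back and forth across $f$ without friction.

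For part (1), I would start from $x\circ y\subseteq f^{-1}(I_2)$ with $x,y\in R_1$. Applying $f$ and using goodness gives $f(x)\circ f(y)=f(x\circ y)\subseteq I_2$, and $\gamma$-2-primariness of $I_2$ yields $f(x)^2\subseteq I_2$ or $f(y)^2\subseteq\gamma(I_2)$. In the first case $f(x^2)=f(x)^2\subseteq I_2$ gives $x^2\subseteq f^{-1}(I_2)$. The second case is where the one new ingredient appears: from $f(y^2)=f(y)^2\subseteq\gamma(I_2)$ we get $y^2\subseteq f^{-1}(\gamma(I_2))$, and the $\delta\gamma$-homomorphism identity $f^{-1}(\gamma(I_2))=\delta(f^{-1}(I_2))$ rewrites this as $y^2\subseteq\delta(f^{-1}(I_2))$, which is exactly what is required.

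For part (2), the ``if'' direction mirrors Theorem \ref{31}(4): given $x_2\circ y_2\subseteq f(I_1)$, lift along the epimorphism to $x_1,y_1$ and run the kernel argument (for $u\in x_1\circ y_1$ one has $f(u)\in f(I_1)$, so $u-w\in Ker f\subseteq I_1$, whence $u\in I_1$, and the $\mathbf{C}$-hyperideal hypothesis gives $x_1\circ y_1\subseteq I_1$); then $\delta$-2-primariness of $I_1$ produces $x_1^2\subseteq I_1$ or $y_1^2\subseteq\delta(I_1)$, and applying $f$ together with the epimorphism identity $f(\delta(I_1))=\gamma(f(I_1))$ (valid since $Ker f\subseteq I_1$) closes this half. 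For the ``only if'' direction I would take $x\circ y\subseteq I_1$, so $f(x)\circ f(y)\subseteq f(I_1)$ gives $f(x)^2\subseteq f(I_1)$ or $f(y)^2\subseteq\gamma(f(I_1))=f(\delta(I_1))$; each containment is then pulled back elementwise through $f$, using $Ker f\subseteq I_1$ to conclude $x^2\subseteq I_1$, and $Ker f\subseteq I_1\subseteq\delta(I_1)$ to conclude $y^2\subseteq\delta(I_1)$.

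The hard part will be the bookkeeping in this last ``only if'' direction: one must correctly identify $\gamma(f(I_1))$ with $f(\delta(I_1))$ (which is where the hypothesis $Ker f\subseteq I_1$ is genuinely needed) and then pull the set inclusion $f(y)^2\subseteq f(\delta(I_1))$ back through $f$, leaning on the expansion property $I_1\subseteq\delta(I_1)$ so that the kernel is absorbed into $\delta(I_1)$. Everything else reduces to the routine transfer of squares across the good homomorphism.
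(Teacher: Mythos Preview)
Your proposal is correct and matches the paper's argument essentially verbatim for part (1) and for the ``if'' half of part (2). For the ``only if'' half of (2), the paper takes a one-line shortcut: since $f$ is an epimorphism with $Ker f\subseteq I_1$, one has $f^{-1}(f(I_1))=I_1$, so part (1) applied to $I_2=f(I_1)$ immediately yields that $I_1$ is $\delta$-2-primary. Your direct element-chase (pulling $f(x)^2\subseteq f(I_1)$ and $f(y)^2\subseteq f(\delta(I_1))$ back through $f$ using $Ker f\subseteq I_1\subseteq\delta(I_1)$) is equivalent and correct, just slightly longer; the shortcut via (1) saves you from repeating the kernel argument a second time.
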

\begin{proof}
(1) Let $a \circ b \subseteq f^{-1}(I_2)$ for some $a,b \in R_1$. Then we have $f(a \circ b)=f(a) \circ f(b) \subseteq I_2$. Since $I_2$ is a $\gamma$-2-primary hyperideal of $R_2$, we get $(f(a))^2 \subseteq I_2$ or $(f(b))^2 \subseteq \gamma(I_2)$ which implies $f(a^2) \subseteq I_2$ or $f(b^2) \subseteq \gamma(I_2)$.  Then  $a^2 \subseteq f^{-1}(I_2)$ or $b^2 \subseteq f^{-1}(\gamma(I_2))=\delta(f^{-1}(I_2))$, because  $f$  is a $\delta \gamma$-homomorphism. Consequenlty, $f^{-1}(I_2)$ is a $\delta$-2-primary hyperideal of $R_1$.

(2) ($\Longrightarrow$) It is quite clear from (1).\\
($\Longleftarrow$) Let $a_2 \circ b_2 \subseteq f(I_1)$ for some $a_2,b_2 \in R_2$. Then for some $a_1,b_1 \in R_1$ we have $f(a_1)=a_2$ and $f(b_1)=b_2$. So $f(a_1) \circ f(b_1)=f(a_1 \circ b_1) \subseteq f(I_1)$. Now, take any $u \in a_1 \circ b_1$. Then $f(u) \in f(a_1 \circ b_1) \subseteq f(I_1)$ and so there exists $w \in I_1$ such that $f(u)=f(w)$. This means that $f(u-w)=0$, that is, $u-w \in Ker f \subseteq I_1$ and then $u \in I_1$. Since  $I_1$ is a {\bf C}-hyperideal of $R_1$, then we get $a_1 \circ b_1 \subseteq  I_1$. Since $I_1$ is a 2-primary  hyperideal of $R_1$, then we obtain $a_1^2 \subseteq I_1$ or $b_1^2 \subseteq \delta(I_1)$. This implies that $f(a_1^2)=a_2^2 \subseteq f(I_1)$ or $f(b_1^2)=b_2^2 \subseteq f(\delta(I_1))=\gamma(f(I_1))$. Thus $f(I_1)$ is a $\gamma$-2-primary hyperideal of $R_2$.
\end{proof}
\begin{corollary}
Let $I$ and $J$ be two hyperideals of $R$ such that $I \subseteq J$. Then $J$ is a $\delta$-primary hyperideal of $R$ if and only if $J/I$ is a $\delta_q$-primary hyperideal of $R/I$.
\end{corollary}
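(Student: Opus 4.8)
The plan is to obtain this corollary as an immediate instance of the homomorphism theorem established above (Theorem \ref{homo}), applied to the canonical projection onto the quotient. First I would introduce the natural map $\pi\colon R \longrightarrow R/I$ given by $\pi(r)=r+I$. This is a good epimorphism, and its kernel is exactly $I$; since by hypothesis $I \subseteq J$, we have $Ker\,\pi = I \subseteq J$, so the kernel condition required in Theorem \ref{homo}(2) is automatically satisfied. I would also record the hyperideal correspondence for the quotient: every hyperideal $K$ of $R/I$ has the form $K=J'/I$ for a unique hyperideal $J'$ of $R$ with $I \subseteq J'$, and under this correspondence $\pi^{-1}(J'/I)=J'$; in particular $\pi(J)=J/I$.

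The key step is to verify that $\pi$ is a $\delta\delta_q$-homomorphism, i.e.\ that $\delta(\pi^{-1}(K))=\pi^{-1}(\delta_q(K))$ for every hyperideal $K=J'/I$ of $R/I$. Here $\delta_q$ is the expansion of $R/I$ defined in the preliminaries by $\delta_q(J'/I)=\delta(J')/I$. On one side, $\pi^{-1}(K)=J'$, so $\delta(\pi^{-1}(K))=\delta(J')$. On the other side, $\delta_q(K)=\delta(J')/I$, and since $I \subseteq J' \subseteq \delta(J')$ the coset $\delta(J')/I$ is a genuine hyperideal of $R/I$ whose preimage is $\pi^{-1}\bigl(\delta(J')/I\bigr)=\delta(J')$. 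Both sides therefore equal $\delta(J')$, which confirms that $\pi$ is a $\delta\delta_q$-homomorphism.

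With these facts in place, I would apply Theorem \ref{homo}(2) with $f=\pi$, $R_1=R$, $R_2=R/I$, $\gamma=\delta_q$, and $I_1=J$ (using $Ker\,\pi \subseteq J$). The theorem then gives that $\pi(J)=J/I$ is a $\delta_q$-2-primary hyperideal of $R/I$ if and only if $J$ is a $\delta$-2-primary hyperideal of $R$, which is exactly the asserted equivalence (the corresponding primary statement following identically from the analogous homomorphism result). I expect the only real obstacle to be the middle step: one must invoke the lattice correspondence between hyperideals of $R$ containing $I$ and hyperideals of $R/I$ and check, using $I \subseteq \delta(J')$, that $\delta(J')/I$ is well defined and pulls back to $\delta(J')$. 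Once the projection is known to be a $\delta\delta_q$-homomorphism, the remainder is a direct citation of the already-proved Theorem \ref{homo}.
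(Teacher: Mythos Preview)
Your proposal is correct and follows exactly the paper's own approach: the paper's proof consists of the single line ``The claim is verified from Theorem \ref{homo},'' and you have simply unpacked that citation by exhibiting the natural projection $\pi:R\to R/I$, checking that it is a $\delta\delta_q$-epimorphism with $Ker\,\pi=I\subseteq J$, and then invoking Theorem \ref{homo}(2). Your added verification that $\delta(\pi^{-1}(J'/I))=\pi^{-1}(\delta_q(J'/I))$ via the hyperideal correspondence is the only nontrivial detail, and it is handled correctly.
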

\begin{proof}
The claim is verified from Theorem \ref{homo}.
\end{proof}
\begin{corollary}
Let $I$ be a hyperideal of $R$ and $S$ a subhyperring of $R$ with $S \nsubseteq I$. If $I$ is a $\delta$-2-primary hyperideal of $R$, then $I \cap S$ is a $\delta$-2-primary hyperideal of $S$. 
\end{corollary}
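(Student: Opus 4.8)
The plan is to argue directly from the definition of a $\delta$-2-primary hyperideal, transporting the hypothesis on $I$ across the inclusion $S \hookrightarrow R$. First I would record the two preliminary facts that make the statement well-posed. Since $S \nsubseteq I$, the set $I \cap S$ is a \emph{proper} subset of $S$, and it is routinely a hyperideal of $S$: it is closed under subtraction (being an intersection of two subgroups) and it absorbs products with elements of $S$, because $I$ absorbs products with all of $R$. Secondly, to even speak of a $\delta$-2-primary hyperideal of $S$ one must fix a hyperideal expansion on $S$; the natural choice is the \emph{induced} expansion $\delta_S$ given by $\delta_S(J) = \delta(J) \cap S$ on hyperideals $J$ of $S$ (interpreting $\delta(J)$ via the hyperideal of $R$ that $J$ generates). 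I would check that $\delta_S$ satisfies the two axioms of Definition: $J \subseteq \delta(J) \cap S = \delta_S(J)$ since $J \subseteq S$ and $J \subseteq \delta(J)$, and monotonicity of $\delta_S$ follows from monotonicity of $\delta$ together with monotonicity of $\cap S$.

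With this in place the core argument is short. Take $a, b \in S$ with $a \circ b \subseteq I \cap S$. In particular $a \circ b \subseteq I$, so since $I$ is a $\delta$-2-primary hyperideal of $R$ we obtain $a^2 \subseteq I$ or $b^2 \subseteq \delta(I)$. The crucial observation is that $S$ is a subhyperring, hence closed under the hyperoperation, so $a^2 = a \circ a \subseteq S$ and likewise $b^2 \subseteq S$. Intersecting with $S$ therefore costs nothing: in the first case $a^2 \subseteq I \cap S$, and in the second case $b^2 \subseteq \delta(I) \cap S = \delta_S(I \cap S)$. This is exactly the $\delta_S$-2-primary condition for $I \cap S$ in $S$, completing the proof.

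The step I expect to be the main obstacle is not the computation but the bookkeeping around the expansion on $S$: one must be careful that $\delta(I) \cap S$ is genuinely the value of the induced expansion on the hyperideal $I \cap S$, since $I \cap S$ need not be a hyperideal of $R$. The clean way to sidestep any ambiguity is to recognize that $I \cap S = \iota^{-1}(I)$, where $\iota\colon S \hookrightarrow R$ is the inclusion, which is a good homomorphism. Then the whole statement is a special case of Theorem \ref{homo}(1), \emph{provided} $\iota$ is a $\delta_S\delta$-homomorphism; but the defining identity $\delta_S(\iota^{-1}(I_2)) = \iota^{-1}(\delta(I_2))$ for each hyperideal $I_2$ of $R$ reads precisely $\delta_S(I_2 \cap S) = \delta(I_2) \cap S$, which is how $\delta_S$ was defined. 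Thus either route works: the direct argument above, or an appeal to Theorem \ref{homo}(1) once $\iota$ is identified as the relevant $\delta_S\delta$-homomorphism.
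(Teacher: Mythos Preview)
The paper gives no proof for this corollary; it is placed directly after Theorem~\ref{homo} and is meant as an immediate application of part~(1) to the inclusion $\iota\colon S\hookrightarrow R$, which is exactly your second route, and your direct argument is nothing more than that proof specialized to $\iota$.

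One point worth tightening, since you rightly flag the expansion on $S$ as the real content: your two candidate descriptions of $\delta_S$ do not automatically agree. Under the extend-then-restrict definition $\delta_S(J)=\delta(\langle J\rangle_R)\cap S$, one has $\langle I\cap S\rangle_R\subseteq I$ and hence only $\delta_S(I\cap S)\subseteq\delta(I)\cap S$, not equality; so the sentence ``which is how $\delta_S$ was defined'' conflates this with the $\delta_S\delta$-homomorphism identity $\delta_S(I_2\cap S)=\delta(I_2)\cap S$. The fix is simply to adopt the latter identity as the \emph{definition} of $\delta_S$ on contracted hyperideals (extending harmlessly elsewhere), after which both your direct computation and the appeal to Theorem~\ref{homo}(1) go through verbatim. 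The paper itself sidesteps all of this by silently reusing the symbol $\delta$ on $S$, so your treatment is already more careful than the original.
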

\begin{theorem}
Let $\delta$ be a hyperideal expansion of $R$. Then the followings are equivalent:
\begin{itemize}
\item[\rm{(1)}]~Every proper principal hyperideal of $R$ is  $\delta$-2-primary. 
\item[\rm{(2)}]~ Every proper hyperideal of $R$ is  $\delta$-2-primary. 
\end{itemize}
\end{theorem}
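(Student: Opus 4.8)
The plan is to establish the two implications separately; the reverse one is immediate and the forward one carries all the content.

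For $(2)\Rightarrow(1)$: every principal hyperideal is in particular a hyperideal, so this direction follows at once from the hypothesis — a proper principal hyperideal, being a proper hyperideal, is $\delta$-2-primary.

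For $(1)\Rightarrow(2)$: I would mimic the classical ideal-theoretic argument, replacing the ambient hyperideal by a principal one sandwiched between the product and $I$. Let $I$ be a proper hyperideal of $R$ and suppose $a \circ b \subseteq I$ for some $a, b \in R$; the goal is $a^2 \subseteq I$ or $b^2 \subseteq \delta(I)$. First I would form the hyperideal $J$ generated by the set $a \circ b$. Since $a \circ b \subseteq I$ and $I$ is a hyperideal, the smallest hyperideal containing $a \circ b$ obeys $a \circ b \subseteq J \subseteq I$, and $J$ is proper because $J \subseteq I \neq R$. Applying hypothesis $(1)$ to $J$ together with $a \circ b \subseteq J$ yields $a^2 \subseteq J$ or $b^2 \subseteq \delta(J)$. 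In the first case $a^2 \subseteq J \subseteq I$; in the second, the monotonicity axiom of the expansion $\delta$ applied to $J \subseteq I$ gives $\delta(J) \subseteq \delta(I)$, whence $b^2 \subseteq \delta(J) \subseteq \delta(I)$. In either case the defining condition for $I$ to be $\delta$-2-primary is met, and the argument closes.

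The step where hypothesis $(1)$ is invoked is the crux, and I expect it to be the main obstacle, because it demands that $J$ be a \emph{principal} hyperideal, whereas $a \circ b$ is in general a set with several elements, so a priori $J$ is only finitely generated. One resolves this by observing that in the hyperrings of interest the hyperideal generated by $a \circ b$ is in fact principal — for instance in the integer hyperrings $\mathbb{Z}_A$, where the additive group is that of a principal ideal domain, $J$ coincides with the principal hyperideal generated by a greatest common divisor of the elements of $a \circ b$. The remaining ingredients, namely the properness of $J$ and the monotonicity of $\delta$, are routine and require no hyperoperation-specific estimates.
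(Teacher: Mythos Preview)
Your outline for $(1)\Rightarrow(2)$ has the right skeleton---pass from $I$ to a smaller hyperideal $J$ with $a\circ b\subseteq J\subseteq I$, apply (1) to $J$, and use the monotonicity of $\delta$---but the step you yourself flag as the crux is not resolved. Hypothesis (1) applies only to \emph{principal} hyperideals, and your $J=\langle a\circ b\rangle$ is, as you note, a priori only generated by a finite set. Your proposed fix, namely that in ``the hyperrings of interest'' (e.g.\ $\mathbb{Z}_A$) such a $J$ happens to be principal, is not an argument: the theorem is stated for an arbitrary multiplicative hyperring $R$, and nothing in the hypotheses forces the additive group to be that of a PID. As written, the proof is incomplete.

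The missing idea is the standing ${\bf C}$-hyperideal assumption of the paper. One does not take $J=\langle a\circ b\rangle$; instead one picks a single element $t\in a\circ b$ and works with the principal hyperideal $\langle t\rangle$. Since $t\in a\circ b\subseteq I$, we have $\langle t\rangle\subseteq I$, so $\langle t\rangle$ is proper. Now $a\circ b\in{\bf C}$ and $(a\circ b)\cap\langle t\rangle\neq\varnothing$ (it contains $t$), so the ${\bf C}$-hyperideal property of $\langle t\rangle$ forces $a\circ b\subseteq\langle t\rangle$. At this point hypothesis (1) applies to the genuinely principal hyperideal $\langle t\rangle$, giving $a^2\subseteq\langle t\rangle\subseteq I$ or $b^2\subseteq\delta(\langle t\rangle)\subseteq\delta(I)$, exactly as you wanted. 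This is the paper's argument; once you replace your $J$ by $\langle t\rangle$ and invoke the ${\bf C}$-property, your write-up becomes correct.
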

\begin{proof}
$(1) \Longrightarrow (2)$ Assume that $I$ is a proper hyperideal of $R$ such that $x \circ y \subseteq I$ for some $x,y \in R$. Take any $t \in x \circ y$. Clearly, $x \circ y \cap <t> \neq \varnothing$. This implies that $x \circ y \subseteq <t> \subseteq I$. Since $<t>$ is a $\delta$-2-primary hyperideal of $R$, then we get $x^2 \subseteq <t> \subseteq I$ or $y^2 \subseteq \delta(<t>) \subseteq \delta(I)$. Therefore $I$ is a $\delta$-2-primary hyperideal of $R$. 

$(2) \Longrightarrow (1)$ It is clear.
\end{proof}

Let $(R_1,+_1,\circ_1)$ and $(R_2,+_2,\circ_2)$ be two multiplicative hyperrings with non zero identity. \cite{ul} Recall $(R_1 \times R_2, +,\circ)$ is a multiplicative hyperring with the operation $+$ and the hyperoperation $\circ$ are defined respectively as

$(x_1,x_2)+(y_1,y_2)=(x_1+_1y_1,x_2+_2y_2)$ 
and

$(x_1,x_2) \circ (y_1,y_2)=\{(x,y) \in R_1 \times R_2 \ \vert \ x \in x_1 \circ_1 y_1, y \in x_2 \circ_2 y_2\}$. 

Assume that $\delta_1$ and $\delta_2$ are hyperideal expansions of $R_1$ and $R_2$ respectively. If $\delta_{R_1 \times R_2}$ is a function of hyperideals of $R$ with $\delta_{R_1 \times R_2}(I_1 \times I_2)=\delta_1(I_1) \times \delta_2(I_2)$ for every hyperideals $I_1$ and $I_2$ of $R_1$ and $R_2$, respectively, then $\delta_{R_1 \times R_2}$ is a hyperideal expansion of ${R_1 \times R_2}$.
\begin{theorem} \label{times}
Let $(R_1, +_1,\circ 1)$ and $(R_2,+_2,\circ_2)$ be two multiplicative hyperrings with non zero identity such that $\delta_1$ and $\delta_2$ be hyperideal expansions of $R_1$ and $R_2$, respectively. Let $I_1$ be a hyperideal of $R_1$. Then the hyperideal $I_1$ of $R_1$ is $\delta_1$-2-primary if and only if $I_1 \times R_2$ is a $\delta_{R_1 \times R_2}$-2-primary of $R_1 \times R_2$.
\end{theorem}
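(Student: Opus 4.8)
The plan is to prove the two implications by transferring statements about the product hyperring back and forth to its first component, using the explicit description of the hyperoperation on $R_1 \times R_2$ together with a few elementary observations. First I would record three facts. (i) For any $(a_1,a_2) \in R_1 \times R_2$, the defining formula $(x_1,x_2)\circ(y_1,y_2)=\{(x,y) : x \in x_1 \circ_1 y_1,\ y \in x_2 \circ_2 y_2\}$ gives $(a_1,a_2)^2 = a_1^2 \times a_2^2$. (ii) A containment $(a_1,a_2)\circ(b_1,b_2) \subseteq I_1 \times R_2$ holds if and only if $a_1 \circ_1 b_1 \subseteq I_1$, since the second-coordinate condition is vacuous (every product in $R_2$ lies in $R_2$). (iii) Using the defining property of $\delta_{R_1\times R_2}$ with $I_2 = R_2$, we have $\delta_{R_1 \times R_2}(I_1 \times R_2) = \delta_1(I_1) \times \delta_2(R_2) = \delta_1(I_1) \times R_2$, because $R_2 \subseteq \delta_2(R_2) \subseteq R_2$.

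For the forward implication, assume $I_1$ is $\delta_1$-2-primary and take $(a_1,a_2),(b_1,b_2) \in R_1 \times R_2$ with $(a_1,a_2)\circ(b_1,b_2) \subseteq I_1 \times R_2$. By (ii) this forces $a_1 \circ_1 b_1 \subseteq I_1$, so the $\delta_1$-2-primary hypothesis yields $a_1^2 \subseteq I_1$ or $b_1^2 \subseteq \delta_1(I_1)$. In the first case $(a_1,a_2)^2 = a_1^2 \times a_2^2 \subseteq I_1 \times R_2$ by (i); in the second $(b_1,b_2)^2 = b_1^2 \times b_2^2 \subseteq \delta_1(I_1) \times R_2 = \delta_{R_1 \times R_2}(I_1 \times R_2)$ by (i) and (iii). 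Either way $I_1 \times R_2$ is $\delta_{R_1\times R_2}$-2-primary, and it is proper because $I_1 \neq R_1$.

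For the converse, suppose $I_1 \times R_2$ is $\delta_{R_1\times R_2}$-2-primary and let $a_1 \circ_1 b_1 \subseteq I_1$ with $a_1,b_1 \in R_1$. The idea is to lift to the product by pairing each element with the identity $1$ of $R_2$, whose existence is exactly the non-zero-identity hypothesis on $R_2$. Then $(a_1,1)\circ(b_1,1) \subseteq I_1 \times R_2$ by (ii), so the hypothesis gives $(a_1,1)^2 \subseteq I_1 \times R_2$ or $(b_1,1)^2 \subseteq \delta_1(I_1) \times R_2$. Reading off the first coordinates of $(a_1,1)^2 = a_1^2 \times (1 \circ_2 1)$ and $(b_1,1)^2 = b_1^2 \times (1 \circ_2 1)$ yields $a_1^2 \subseteq I_1$ or $b_1^2 \subseteq \delta_1(I_1)$, so $I_1$ is $\delta_1$-2-primary.

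The only delicate points here are bookkeeping rather than conceptual. The step most likely to trip one up is the converse, where one must confirm via the explicit product formula that pairing with $1$ produces a square whose first coordinate is exactly $a_1^2$ (respectively $b_1^2$), so that projecting onto the first factor is legitimate; the second coordinate $1 \circ_2 1$ is then harmless since it is automatically contained in $R_2$. Beyond that, the proof rests entirely on the identities (i)–(iii), which I would state explicitly before carrying out either direction.
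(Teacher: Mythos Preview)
Your proposal is correct and follows essentially the same approach as the paper: both directions proceed by projecting to and lifting from the first coordinate, and the converse uses the identity $1_{R_2}$ in the second slot exactly as the paper does (the paper phrases it as a contradiction, you do it directly, which is immaterial). If anything, your bookkeeping is slightly more careful than the paper's---you correctly write $(a_1,1)^2 = a_1^2 \times (1 \circ_2 1)$ rather than $(a_1^2,1_{R_2})$, and you justify $\delta_2(R_2)=R_2$ explicitly---but the underlying argument is identical.
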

\begin{proof}
($\Longrightarrow$) Let $(x_1,x_2) \circ (y_1,y_2) \subseteq I_1 \times R_1$ for some $(x_1,x_2), (y_1,y_2) \in R_1 \times R_2$. This means  $x_1 \circ_1 y_1 \subseteq I_1$. Since $I_1$ is a $\delta_1$-2-primary hyperideal of $R_1$, then we get $x_1^2 \subseteq I_1$ or $y_1^2 \subseteq \delta_1(I_1)$. This implies that $(x_1,x_2)^2=(x_1^2,x_2^2) \subseteq I_1\times R_2$ or $(y_1,y_2)^2=(y_1^2,y_2^2) \subseteq \delta_{R_1 \times R_2}(I_1 \times R_2)$.

($\Longleftarrow$) Assume on the contrary that $I_1$ is not a $\delta_1$-2-primary hyperideal of $R_1$. So $x_1 \circ_1 y_1 \subseteq I_1$ with $x_1, y_1 \in R_1$ implies that $x_1^2 \nsubseteq I_1$ and $y_1^2 \nsubseteq \delta_1(I_1)$. It is clear that $(x_1,1_{R_2})\circ (y_1, 1_{R_2}) \subseteq I_1 \times R_2$. Since $I_1 \times R_2$ is a $\delta_{R_1 \times R_2}$-2-primary hyperideal of $R_1 \times R_2$, then we have $(x_1,1_{R_2})^2=(x_1^2, 1_{R_2}) \subseteq I_1 \times R_2$ or $(y_1,1_{R_2})^2=(y_1^2,1_{R_2}) \subseteq \delta_{R_1 \times R_2}(I_1 \times R_2)$. Hence we get $x_1^2 \subseteq I_1$ or  $y_1^2 \subseteq \delta_1(I_1)$ which is a contradiction. Consequently, $I_1$ is a $\delta_1$-2-primary hyperideal of $R_1$.  
\end{proof}
\begin{theorem}
Let $(R_1, +_1,\circ 1)$ and $(R_2,+_2,\circ_2)$ be two multiplicative hyperrings with non zero identity such that $\delta_1$ and $\delta_2$ be hyperideal expansions of $R_1$ and $R_2$, respectively. Let $I_1$ and $I_2$ be  some hyperideals of $R_1$ and $R_2$, respectively. Then the
following statements are equivalent:
\begin{itemize}
\item[\rm{(1)}]~ $I_1 \times I_2$ is a $\delta_{R_1 \times R_2}$-2-primary hyperideal of $R_1 \times R_2$.
\item[\rm{(2)}]~ $I_1=R_1$ and $I_2$ is a $\delta_2$-2-primary hyperideal of $R_2$ or $I_2=R_2$ and $I_1$ is a $\delta_1$-2-primary hyperideal of $R_1$.
\end{itemize}
\end{theorem}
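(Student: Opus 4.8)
The plan is to reduce the equivalence to the already–established Theorem \ref{times} together with its mirror image obtained by interchanging the two factors, and to handle the substantive implication $(1)\Longrightarrow(2)$ by a direct computation with the canonical ``corner'' elements $(1_{R_1},0)$ and $(0,1_{R_2})$. The direction $(2)\Longrightarrow(1)$ I expect to be immediate, while $(1)\Longrightarrow(2)$ carries all of the content, the crux being to force one of the two factors to be the whole ring.

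For $(2)\Longrightarrow(1)$ I would split into the two listed cases. If $I_1=R_1$ and $I_2$ is $\delta_2$-2-primary, then $I_1\times I_2=R_1\times I_2$, and the version of Theorem \ref{times} in which the roles of $R_1$ and $R_2$ are swapped gives that $R_1\times I_2$ is $\delta_{R_1\times R_2}$-2-primary. The case $I_2=R_2$ with $I_1$ a $\delta_1$-2-primary hyperideal is symmetric, handled by Theorem \ref{times} exactly as stated. Thus the only thing to record here is that Theorem \ref{times} is symmetric in the two coordinates.

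For $(1)\Longrightarrow(2)$, assume $I_1\times I_2$ is $\delta_{R_1\times R_2}$-2-primary; being a $\delta$-2-primary hyperideal it is proper, so not both factors equal the whole ring. First I would feed the relation $(1_{R_1},0)\circ(0,1_{R_2})=\{(0,0)\}\subseteq I_1\times I_2$ into the defining condition. Using the identities, $(1_{R_1},0)^2=\{(1_{R_1},0)\}$ and $(0,1_{R_2})^2=\{(0,1_{R_2})\}$, so the $\delta$-2-primary property forces either $1_{R_1}\in I_1$, i.e. $I_1=R_1$, or $1_{R_2}\in\delta_2(I_2)$. Running the same test with the factors exchanged gives dually either $I_2=R_2$ or $1_{R_1}\in\delta_1(I_1)$. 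Once one factor is shown to be the full ring, say $I_2=R_2$, I would apply Theorem \ref{times} directly to conclude that $I_1$ is $\delta_1$-2-primary (and $I_1\neq R_1$, since $I_1\times I_2$ is proper), which is precisely the second alternative in (2); the case $I_1=R_1$ is finished by the swapped form of Theorem \ref{times}.

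The hard part will be the asymmetry of the $\delta$-2-primary condition, which treats the two squares differently ($a^2\subseteq I$ versus $b^2\subseteq\delta(I)$). The corner computation above only delivers ``$I_1=R_1$ or $\delta_2(I_2)=R_2$'' (and dually ``$I_2=R_2$ or $\delta_1(I_1)=R_1$''), so one must still rule out the degenerate possibility that $I_1,I_2$ both stay proper while $\delta_1(I_1)=R_1$ and $\delta_2(I_2)=R_2$. In that situation every square lies in $\delta_1(I_1)\times\delta_2(I_2)=R_1\times R_2$ automatically, the second alternative of the defining condition is vacuously satisfied, and $I_1\times I_2$ is $\delta$-2-primary with both factors proper. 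Closing this gap is the step I expect to be the crux: it appears to require an additional hypothesis on the expansions, for instance that each $\delta_i$ carries a proper hyperideal to a proper hyperideal, and I would either impose such a condition or restrict to expansions such as $\delta_0$ and $\delta_1$ for which it holds before upgrading $\delta_i(I_i)=R_i$ to the genuinely stronger $I_i=R_i$.
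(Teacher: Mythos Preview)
Your approach --- reduce both directions to Theorem \ref{times} and its symmetric twin, and for $(1)\Longrightarrow(2)$ probe with the corner product $(1_{R_1},0)\circ(0,1_{R_2})$ to force one factor to equal the whole ring --- matches the paper's intent. The paper's own argument, however, is far terser than yours: for $(1)\Longrightarrow(2)$ it writes only ``Assume that $I_1=R_1$. Then $I_2$ is a $\delta_2$-2-primary hyperideal of $R_2$, by Theorem \ref{times},'' and for $(2)\Longrightarrow(1)$ it simply cites Theorem \ref{times}. In particular the paper never supplies the step you isolate as the crux, namely that one of $I_1,I_2$ must be the full ring.

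Your diagnosis of that step is correct, and the gap you flag is a defect of the statement rather than of your method. If $I_1\subsetneq R_1$ and $I_2\subsetneq R_2$ are proper with $\delta_1(I_1)=R_1$ and $\delta_2(I_2)=R_2$ (the paper's own example $\delta_\star(I)=3\mathbb{Z}+I$ with $I=2\mathbb{Z}$ shows expansions need not preserve properness), then $\delta_{R_1\times R_2}(I_1\times I_2)=R_1\times R_2$, so $I_1\times I_2$ is vacuously $\delta_{R_1\times R_2}$-2-primary while $(2)$ fails. Hence your proposed remedy --- assume each $\delta_i$ sends proper hyperideals to proper hyperideals, or restrict to $\delta_0,\delta_1$ --- is not merely convenient but necessary; without it $(1)\Longrightarrow(2)$ is false, and neither your argument nor the paper's can be completed.
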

\begin{proof}
(1) $\Longrightarrow$ (2) Assume that $I_1=R_1$. Then $I_2$ is a $\delta_2$-2-primary hyperideal of $R_2$, by Theorem \ref{times}.

(2) $\Longrightarrow$ (1) This can be proved, by using Theorem \ref{times}.
\end{proof}
\begin{example}
Suppose that $(\mathbb{Z},+,.)$ is the ring of integers. Then $(\mathbb{Z},+,\circ_1)$ is a multiplicative hyperring with
a hyperoperation $a \circ_1 b =\{ab,7ab\}$. Also, $(\mathbb{Z},+,\circ_2)$ is a multiplicative hyperring with
a hyperoperation $a \circ_2 b =\{ab,5ab\}$. Note that $(\mathbb{Z} \times \mathbb{Z},+,\circ)$ is a multiplicative hyperring with a hyperoperation $(a,b) \circ (c,d)=\{(x,y) \in \mathbb{Z} \times \mathbb{Z} \ \vert \ x \in a \circ_1 c , y \in b \circ_2
d \}$. Clearly, $7\mathbb{Z}=\{7t \ \vert \ t \in \mathbb{Z}\}$ and $5\mathbb{Z}=\{5t \ \vert \ t \in \mathbb{Z}\}$ are two $\delta_0$-2-primary of $(\mathbb{Z},+,\circ_1)$ and $(\mathbb{Z},+,\circ_2)$, respectively. Since $(5,0) \circ (0,7) \subseteq 7\mathbb{Z} \times 5 \mathbb{Z}
$ but $(5,0)^2,(0,7)^2 \nsubseteq 7\mathbb{Z} \times 5\mathbb{Z}$ and  $(5,0)^2,(0,7)^2 \nsubseteq \delta_{\mathbb{Z} \times \mathbb{Z}}(7\mathbb{Z} \times 5\mathbb{Z})=7\mathbb{Z} \times 5\mathbb{Z}$, then $7\mathbb{Z} \times 5\mathbb{Z}$ is not a $\delta_0 \times \delta_0$-2-primary hyperideal of $\mathbb{Z}  \times \mathbb{Z}$.
\end{example}


\end{document}